\documentclass[12pt,reqno,a4paper]{amsart}

\usepackage[a4paper,margin=4cm]{geometry}
\usepackage[T1]{fontenc}
\usepackage{lmodern}
\usepackage{microtype}
\usepackage{amsmath,amssymb,amsthm}
\usepackage{aliascnt}
\usepackage{enumitem}

\usepackage{xcolor}
\usepackage[
  colorlinks=true,
  citecolor=blue!75!black,
  linkcolor=red!65!black,
  linktocpage=true
]{hyperref}

\usepackage[nameinlink,noabbrev]{cleveref}

\hypersetup{
  pdftitle={Negative Effective Divisors and Bridgeland Stability of Line Bundles on Surfaces},
  pdfauthor={Anthony Mäkelä},
  pdfsubject={Bridgeland stability of line bundles on smooth projective surfaces},
  pdfkeywords={Bridgeland stability, line bundles, projective surfaces, Harder--Narasimhan filtrations, Lorentzian geometry}
}
\setlist[enumerate]{leftmargin=2.25em}
\setlist[itemize]{leftmargin=2.25em}
\allowdisplaybreaks
\numberwithin{equation}{section}

\newtheorem{theorem}{Theorem}[section]
\newaliascnt{proposition}{theorem}
\newtheorem{proposition}[proposition]{Proposition}
\aliascntresetthe{proposition}
\newaliascnt{lemma}{theorem}
\newtheorem{lemma}[lemma]{Lemma}
\aliascntresetthe{lemma}
\newaliascnt{corollary}{theorem}
\newtheorem{corollary}[corollary]{Corollary}
\aliascntresetthe{corollary}
\theoremstyle{definition}
\newaliascnt{definition}{theorem}

\aliascntresetthe{definition}
\newaliascnt{example}{theorem}
\newtheorem{example}[example]{Example}
\aliascntresetthe{example}
\theoremstyle{remark}
\newaliascnt{remark}{theorem}
\newtheorem{remark}[remark]{Remark}
\aliascntresetthe{remark}

\crefname{theorem}{theorem}{theorems}
\Crefname{theorem}{Theorem}{Theorems}
\crefname{proposition}{proposition}{propositions}
\Crefname{proposition}{Proposition}{Propositions}
\crefname{lemma}{lemma}{lemmas}
\Crefname{lemma}{Lemma}{Lemmas}
\crefname{corollary}{corollary}{corollaries}
\Crefname{corollary}{Corollary}{Corollaries}
\crefname{definition}{definition}{definitions}
\Crefname{definition}{Definition}{Definitions}
\crefname{example}{example}{examples}
\Crefname{example}{Example}{Examples}
\crefname{remark}{remark}{remarks}
\Crefname{remark}{Remark}{Remarks}

\newcommand{\C}{\mathbb C}
\newcommand{\R}{\mathbb R}
\newcommand{\NS}{\operatorname{NS}}
\newcommand{\Coh}{\operatorname{Coh}}
\newcommand{\Db}{\mathrm D^{\mathrm b}}
\newcommand{\RSheafHom}{\mathbf R\mathcal{H}\!om_X}
\newcommand{\Ext}{\operatorname{Ext}}
\newcommand{\ch}{\operatorname{ch}}
\newcommand{\rk}{\operatorname{rk}}
\newcommand{\Arg}{\operatorname{arg}}
\newcommand{\im}{\operatorname{Im}}
\newcommand{\cO}{\mathcal O}
\newcommand{\cA}{\mathcal A}
\newcommand{\cT}{\mathcal T}
\newcommand{\cF}{\mathcal F}
\newcommand{\eps}{\varepsilon}
\newcommand{\wt}{\widetilde}
\newcommand{\ol}{\overline}
\newcommand{\abs}[1]{\lvert #1\rvert}
\newcommand{\norm}[1]{\lVert #1\rVert}

\title[Negative Effective Divisors and Bridgeland Stability]{Negative Effective Divisors and Bridgeland Stability of Line Bundles on Surfaces}
\author{Anthony Mäkelä}
\address{Mathematical Sciences, University of Gothenburg, Sweden}
\email{anthony.makela@gu.se}
\date{}

\begin{document}
\raggedbottom

\begin{abstract}
Let $X$ be a connected smooth complex projective surface. We prove an
effective-divisor version of the
Arcara--Miles conjecture, together with its strict analogue. For every divisorial Bridgeland stability condition, failure of stability,
respectively semistability, of a line bundle or its relevant shift is detected
by a natural subobject associated with a non-zero effective Cartier divisor
$C$ satisfying $C^2<0$.

The proof combines minimal-rank destabilizers, slope Harder--Narasimhan
filtrations, the Bogomolov--Gieseker inequality, and an ordered Lorentzian
partial-sum estimate that forces the minimal rank to be one. Consequently, strict semistability of a line bundle or its relevant shift is detected by a negative effective divisor, and a numerical semistability condition arising from the twisted deformed Hermitian--Yang--Mills equation is equivalent to stability under all integral scalings.
\end{abstract}

\maketitle

\section{Introduction}\label{sec:introduction}

Let $X$ be a connected smooth complex projective surface. For
$B,\omega\in\NS(X)_\R$, with $\omega$ ample, consider the standard
divisorial stability condition
\begin{equation}\label{eq:central-charge-integral}
  \sigma_{B,\omega}=(Z_{B,\omega},\cA_{B,\omega}),
  \qquad
  Z_{B,\omega}(E)
  :=-\int_X e^{-(B+i\omega)}\ch(E).
\end{equation}
Here $\cA_{B,\omega}$ is the heart obtained by tilting $\Coh(X)$ at the
torsion pair determined by the $\omega$-slope threshold $B\cdot\omega$. The precise definition of $\cA_{B,\omega}$ is recalled in \Cref{sec:preliminaries}. The general notion of a stability condition is due to Bridgeland
\cite{Bridgeland}. This construction originated for $K3$ surfaces in
\cite[Section~6]{BriK3} and was extended to arbitrary smooth projective
surfaces in \cite[Section~2]{AB}. See also
\cite[Section~6]{MacriSchmidt} for a modern account, including the support
property. We refer to members of this family as divisorial
stability conditions.
Arguments of non-zero central charges are taken in $(0,\pi]$. Thus a non-zero
object $T\in\cA_{B,\omega}$ is stable, respectively semistable, if every
non-zero proper subobject $E\subset T$ satisfies
\begin{align*}
  \Arg Z_{B,\omega}(E)&<\Arg Z_{B,\omega}(T)
    &&\text{in the stable case},\\
  \Arg Z_{B,\omega}(E)&\leq\Arg Z_{B,\omega}(T)
    &&\text{in the semistable case}.
\end{align*}
We use the usual shift-invariant meaning of stability for arbitrary objects of
$\Db(X)$. If $L$ is a line bundle, put
\begin{equation*}
  s_{B,\omega}(L):=\omega\cdot(c_1(L)-B).
\end{equation*}
Then $L\in\cA_{B,\omega}$ when $s_{B,\omega}(L)>0$, while
$L[1]\in\cA_{B,\omega}$ when $s_{B,\omega}(L)\leq0$.

Arcara and Miles conjectured that the ``only objects that could destabilize''
a line bundle or its relevant shift are the natural objects associated with
curves of negative self-intersection \cite[Conjecture~1]{AM}. They use the term \emph{destabilizes} for a strict phase inequality \cite[Sections~2--3]{AM}, although their later arguments also involve containment and comparison of walls. Fan formulated failure of stability as a
detection statement: a divisorial subobject associated with a curve of
negative self-intersection has phase at least that of the relevant object
\cite[Conjecture~2.8]{Fan}. Stoppa considered the specialization $B=0$ with a Kähler class. In the
conventions fixed above, his formulation concerns failure of semistability
\cite[Conjecture~6.1 and Section~6.1.1]{Stoppa}. For the del Pezzo surface of
Picard rank three, Mizuno and Yoshida proved the stronger statement that the
divisorial subobject can be chosen with phase at least that of a prescribed
destabilizing object
\cite[Conjecture~A, Theorem~3.2, and Propositions~3.14 and~3.15]{MY}.

Let $T$ denote whichever of $L$ and $L[1]$ belongs to the tilted heart.
For a non-zero effective Cartier divisor $C$ with self-intersection number $C^2<0$, let $D_C$ denote the corresponding natural subobject
\[
  L(-C)\subset L
  \qquad\text{or}\qquad
  L(C)|_C\subset L[1],
\]
whenever the corresponding morphism is a monomorphism in the tilted heart.

Our theorem proves the two implications
\[
\begin{aligned}
 T\ \text{not stable}
   &\Longrightarrow \Arg Z(D_C)\geq\Arg Z(T),\\
 T\ \text{not semistable}
   &\Longrightarrow \Arg Z(D_C)>\Arg Z(T)
\end{aligned}
\]
for some $C$. This is a detection result rather than a classification. $D_C$ need not coincide with a given destabilizing subobject, and its phase is compared only with that of $T$.
The distinction between detection and classification is essential as a strict destabilizing subobject need not itself be a line bundle, as shown in \Cref{ex:non-line-bundle-destabilizer}.

We formulate our results for non-zero effective Cartier divisors, which are
not assumed to be reduced or irreducible. We use the term \emph{integral
curve} when reducedness and irreducibility are required.

For an effective Cartier divisor $C$, write
\[
  L(C)|_C:=L\otimes\cO_X(C)\otimes\cO_C,
\]
and let $L(C)|_C\to L[1]$ denote the connecting morphism associated with
\[
  0\longrightarrow L\longrightarrow L(C)\longrightarrow L(C)|_C
  \longrightarrow0.
\]

\begin{theorem}
\label{thm:main}
Let $X$ be a connected smooth complex projective surface, let
$B,\omega\in\NS(X)_\R$ with $\omega$ ample, let $L$ be a line bundle, and
put $s=s_{B,\omega}(L)$.
\begin{enumerate}[label=\textup{(\alph*)}]
  \item Suppose $s>0$, so that $L\in\cA_{B,\omega}$.
  \begin{enumerate}[label=\textup{(\roman*)}]
    \item The object $L$ is not $\sigma_{B,\omega}$-stable if and only if
    there is a non-zero effective Cartier divisor $C$ such that
    \[
      C^2<0,\qquad s-\omega\cdot C>0,
    \]
    and
    \begin{equation}\label{eq:main-unshifted-phase}
        \Arg Z_{B,\omega}(L(-C))
        \geq
        \Arg Z_{B,\omega}(L).
    \end{equation}
    \item The object $L$ is not $\sigma_{B,\omega}$-semistable if and only
    if there is such a divisor $C$ for which
    \eqref{eq:main-unshifted-phase} is strict.
  \end{enumerate}

  \item Suppose $s\leq0$, so that $L[1]\in\cA_{B,\omega}$.
  \begin{enumerate}[label=\textup{(\roman*)}]
    \item The object $L[1]$ is not $\sigma_{B,\omega}$-stable if and only if
    there is a non-zero effective Cartier divisor $C$ such that
    \[
      C^2<0,\qquad s+\omega\cdot C\leq0,
    \]
    and
    \begin{equation}\label{eq:main-shifted-phase}
        \Arg Z_{B,\omega}(L(C)|_C)
        \geq
        \Arg Z_{B,\omega}(L[1]).
    \end{equation}
    \item The object $L[1]$ is not $\sigma_{B,\omega}$-semistable if and only
    if there is such a divisor $C$ for which
    \eqref{eq:main-shifted-phase} is strict.
  \end{enumerate}
\end{enumerate}
\end{theorem}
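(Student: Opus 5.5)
The "if" directions are the easy half, so I would dispose of them first. In case (a), if $C$ is effective with $s-\omega\cdot C>0$, then $L(-C)$ is a line bundle of positive $\omega$-slope relative to $B$, so $L(-C)\in\cA_{B,\omega}$. The cokernel of $L(-C)\to L$ in $\Coh(X)$ is $L|_C$, a torsion sheaf, which lies in $\cT\subset\cA_{B,\omega}$. Hence $L(-C)\subset L$ is a proper non-zero subobject in the heart, and the phase inequality exhibits failure of stability, or of semistability in the strict case.

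In case (b), $L(C)|_C$ is torsion, hence lies in $\cA_{B,\omega}$. For the connecting map $L(C)|_C\to L[1]$, the cone is $L(C)[1]$. The condition $s+\omega\cdot C\le 0$ puts $L(C)$ in the free part, so $L(C)[1]\in\cA_{B,\omega}$. Thus $0\to L(C)|_C\to L[1]\to L(C)[1]\to 0$ is a short exact sequence in the heart and $L(C)|_C$ is a subobject. The hypothesis $C^2<0$ is not even needed for this direction.

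For the "only if" directions I would argue by minimal rank. Suppose $T$ is not stable (resp. not semistable). Among all subobjects $E\subset T$ in $\cA_{B,\omega}$ with $\Arg Z(E)\ge\Arg Z(T)$ (resp. $>$), choose one with $\ch_0$ minimal in absolute value. Write the long exact cohomology sequence of $0\to E\to T\to Q\to0$.

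In case (a), $H^{-1}(E)=0$ and $H^{-1}(Q)\to E\to L\to H^0(Q)\to 0$. So $H^0(E)$ has image in $L$ of the form $I_Z\otimes L(-C)$ with $C$ effective, and a torsion-free kernel $K=H^{-1}(Q)$ whose slope HN factors all have $\omega$-slope $\le B\cdot\omega$.

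In case (b), $E$ is a sheaf mapping to $L[1]$, i.e. an extension class. Its kernel in the heart is $H^{-1}(Q)\subset L$, which again gives a divisor $C$ via the saturation, and $E$ surjects with torsion-free $\mu$-semistable-free part.

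In both cases the goal is to show the minimal rank forces $E$ to have rank $\ch_0(L)=1$ in case (a), or rank $0$ in case (b). Then $E$ is $I_Z\otimes L(-C)$ or a torsion sheaf $F$ supported on $C$. One compares it with $L(-C)$, resp. $L(C)|_C$: removing the points $Z$ only increases $\ch_2$ loss, and Bogomolov-type estimates then show the divisorial object has phase at least as large. Finally, if $C^2\ge 0$, one shows $\Arg Z(L(-C))<\Arg Z(L)$. Writing $C=C_{\rm neg}+C'$ via Zariski-like decomposition into the part with negative definite intersection matrix and the rest, only the negative part can contribute, and some $C$ with $C^2<0$ survives.

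\textbf{Main obstacle: forcing rank one.} This requires showing that any higher-rank destabilizer can be replaced by a lower-rank one. I would take the slope HN filtration $0=K_0\subset\dots\subset K_m=K$ of the kernel and consider the partial quotients $E_j$ obtained by pulling back $K_j$. I would bound each factor's $\ch_2$ using Bogomolov--Gieseker, $\ch_2\le \ch_1^2/(2\ch_0)$. The $Z$-vectors $Z(E_j)$ then become ordered partial sums of vectors $(\omega\cdot D_i,\ D_i^2/2)$, and the Hodge index theorem gives these the Lorentzian signature on $\NS(X)_\R$. The estimate to prove is a reverse Cauchy--Schwarz inequality for ordered sums of timelike-ish vectors, which shows that the phase function along the partial sums is unimodal. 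Consequently either the rank-one piece or the full $T$ already carries phase $\ge\Arg Z(T)$, which contradicts minimality unless the rank is one. I expect getting the correct inequality (strict versus non-strict, for the two parts (i) and (ii)) and handling boundary factors of slope exactly $B\cdot\omega$ to be the delicate point.
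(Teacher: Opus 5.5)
The ``if'' directions are correct and agree with the paper's criterion for divisorial subobjects. The gap is in the rank reduction, which is the real content of the theorem: it is not established, and the mechanism you sketch would not produce a contradiction.

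You filter only the kernel $K=H^{-1}(Q)$. But $E$ itself need not be $\mu_\omega$-semistable, so Bogomolov--Gieseker says nothing about $E$. You also need its own Harder--Narasimhan filtration $E_1\subset\dots\subset E_m=E$. Its proper steps are subobjects of $\cO_X$ of smaller rank, by tilt-exactness, so minimality gives $\Lambda(E_i)\le0$. For the kernel's steps, $E/F_j\hookrightarrow\cO_X$ gives $\Lambda(F_j)=\Lambda(E)-\Lambda(E/F_j)\ge\Lambda(E)\ge0$.

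These are sign conditions on partial sums only; individual factors can have either sign. Replacing $\ch_2$ of each factor by its Bogomolov bound $c^2/(2n)$ destroys the conditions $\Lambda(E_i)\le0$, because raising $\ch_2$ raises $\Lambda$. The paper keeps this information exactly by lifting each factor to $\wt U_G=\Gamma+c/n+(\sqrt{\Delta(G)}/n)\eps$ in $\NS(X)_\R\oplus\R\eps$. Then $\wt U_G^2=a^2+2\Lambda(G)/n$. An ordered partial-sum estimate (convexity of the lower null coordinate plus Abel summation) then shows that $X_E=r\Gamma+c_1(E)$ and $-X_F=-(r-1)\Gamma-c_1(F)$ are future-pointing timelike. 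Moreover $X_E^2\ge r^2a^2$ and $X_F^2\ge(r-1)^2a^2$.

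The contradiction comes from one specific rank-one competitor: $\cO_X(-C)$, where $I_Z(-C)$ is the image of $E\to\cO_X$. Since $\Gamma-C=X_E+(-X_F)$, the reverse triangle inequality gives $\Lambda(\cO_X(-C))\ge2r(r-1)a^2>0$.

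Your ``unimodality of phase along partial sums'' does not capture this. The vectors $(\omega\cdot D_i,D_i^2/2)$ are points of the central-charge plane, not of $\NS(X)_\R$, so the Hodge index theorem gives them no Lorentzian structure. Also, the dichotomy ``either the rank-one piece or the full $T$ has phase $\ge\Arg Z(T)$'' is vacuous, since $T$ always does.

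Two further steps are also off.

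\emph{The step $C^2<0$.} No Zariski-type decomposition is needed, and the one you propose would fail, because the phase inequality does not pass to a part of $C$. Compare $\Lambda(\cO_X(-C_1-C_2))=\Lambda(\cO_X(-C_1))+\Lambda(\cO_X(-C_2))+C_1\cdot C_2$. Instead, $\cO_X(-C)\in\cT_{B,\omega}$ and $\Lambda(\cO_X(-C))\ge0$ already force $C^2<0$. Put $W=\Gamma-C$. Then $\Gamma$ and $W$ are both future-pointing, $W^2\ge\Gamma^2=a^2$, and $e\cdot W<e\cdot\Gamma$ by ampleness. Comparing spatial components gives $\norm{\gamma-w}>e\cdot\Gamma-e\cdot W$, i.e.\ $C^2=(\Gamma-W)^2<0$.

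\emph{The shifted chamber.} Your direct approach is not set up. For a proper subobject $E\subset L[1]$ one shows $H^{-1}(E)=0$, so the relevant sequence is $0\to L\to H^{-1}(Q)\to E\to0$; the inclusion runs opposite to what you wrote. You would then have to redo the whole Lorentzian argument in that configuration. The paper avoids this. For $s<0$ it uses $\mathbb D=\RSheafHom(-,\cO_X)[1]$, which exchanges (semi)stability of $\cO_X[1]$ at $B$ with that of $\cO_X$ at $-B$ and sends $\cO_C$ to $\cO_X(C)|_C$; the seesaw property then transfers the phase inequality. The boundary case $s=0$ is not covered by duality, and you do not address it. The paper handles it by showing directly that $L[1]$ has no non-zero proper subobject.
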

By \Cref{lem:divisorial-subobject-criterion}, the two slope inequalities
in \Cref{thm:main} are exactly the conditions under which the
corresponding natural morphisms $L(-C)\to L$ and
$L(C)|_C\to L[1]$ are monomorphisms in the tilted heart.

Only the forward implications require proof. Starting from a weak or strict
destabilizer of minimal positive rank, we construct the required negative
divisorial subobject.

\begin{remark}
The cited formulations use the unqualified term \emph{curve}.
Arcara--Miles write the divisorial part of a rank-one subsheaf as
$I_Z(-C)$ for an ``effective curve'' $C$, while using ``irreducible curve''
when irreducibility is intended
\cite[Lemma~4.1 and Theorem~1.1]{AM}. Fan and Stoppa likewise retain the
unqualified terminology in \cite[Conjecture~2.8]{Fan} and
\cite[Conjecture~6.1]{Stoppa}. Accordingly, \Cref{thm:main} proves the
effective-divisor form of these conjectures.

If \emph{curve} is instead required to mean an integral curve, this is a stronger assertion than our theorem covers. Indeed, if a detecting effective
divisor has a decomposition
\[
  C=\sum_i m_iC_i
\]
into integral curves, then $C^2<0$ implies that some component $C_i$ has
$C_i^2<0$, since distinct integral curves have non-negative intersection.
However, the phase inequality need not pass to such a component. For instance, in the notation of \Cref{sec:phase-functional},
\[
  \Lambda\bigl(\cO_X(-C_1-C_2)\bigr)
  =
  \Lambda\bigl(\cO_X(-C_1)\bigr)
  +\Lambda\bigl(\cO_X(-C_2)\bigr)
  +C_1\cdot C_2.
\]
Since $C_1\cdot C_2\geq0$, non-negativity of the left-hand side does not
imply non-negativity of either componentwise term. Obtaining an integral
detecting curve would therefore require an additional argument.
\end{remark}

We next record three consequences of \Cref{thm:main}. The first says that
strict semistability of a line bundle, or of its relevant shift, is detected
by a negative effective divisor.

\begin{corollary}
\label{cor:actual-wall}
Let $L$ be a line bundle, and let $T\in\{L,L[1]\}$ be the shift belonging to
$\cA_{B,\omega}$. If $T$ is semistable but not stable, then there is a
non-zero effective Cartier divisor $C$ with $C^2<0$ and a short exact sequence
\[
  0\longrightarrow D_C\longrightarrow T\longrightarrow Q_C\longrightarrow0
\]
in $\cA_{B,\omega}$ such that $D_C$, $T$, and $Q_C$ are semistable of the
same phase. Here $D_C=L(-C)$ in the unshifted chamber and
$D_C=L(C)|_C$ in the shifted chamber.
\end{corollary}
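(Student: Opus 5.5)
The plan is to deduce this directly from \Cref{thm:main}, using the standard fact that a semistable object is destabilized at most weakly by its subobjects. Suppose $T$ is semistable but not stable.

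\textbf{Step 1: produce the divisor.} By part (a)(i) or (b)(i) of \Cref{thm:main}, depending on the sign of $s$, there is a non-zero effective Cartier divisor $C$ with $C^2<0$ satisfying the relevant slope inequality. The slope inequality is $s-\omega\cdot C>0$ or $s+\omega\cdot C\le 0$, respectively. The phase inequality is $\Arg Z(D_C)\ge\Arg Z(T)$.

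\textbf{Step 2: the exact sequence.} By \Cref{lem:divisorial-subobject-criterion}, the slope inequality says exactly that the natural morphism $D_C\to T$ is a monomorphism in $\cA_{B,\omega}$. So there is a short exact sequence $0\to D_C\to T\to Q_C\to 0$ in the heart. In the unshifted chamber $Q_C=\cO_C\otimes L$; in the shifted chamber $Q_C=L(C)[1]$. We only need it as the cokernel, though.

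\textbf{Step 3: equal phases.} Since $T$ is semistable and $D_C\subset T$ is a non-zero subobject, we have $\Arg Z(D_C)\le\Arg Z(T)$, so equality holds. Note $D_C\neq T$, because $C\neq0$ makes the Chern characters differ. Hence $Q_C\neq0$, and additivity of $Z$ then forces $\Arg Z(Q_C)=\Arg Z(T)$. One subtlety is that $Z(Q_C)$ must be non-zero; this holds because $Q_C$ is a non-zero object of the heart. By the same additivity, equality of phases would persist even if $Z(D_C)$ and $Z(T)$ were collinear with different moduli, so the phase comparison is unaffected.

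\textbf{Step 4: semistability of the pieces.} This is the only mildly delicate step.
\begin{itemize}
\item For $D_C$: any subobject $E\subset D_C$ is a subobject of $T$, so $\Arg Z(E)\le\Arg Z(T)=\Arg Z(D_C)$.
\item For $Q_C$: any quotient $Q_C\twoheadrightarrow F$ is a quotient of $T$. Semistability of $T$, via the seesaw property, gives $\Arg Z(F)\ge\Arg Z(T)=\Arg Z(Q_C)$. This is the quotient characterization of semistability.
\end{itemize}
Hence all three objects are semistable of the same phase. Equivalently, one can invoke the standard fact that the semistable objects of a fixed phase form an abelian subcategory closed under extensions, subobjects and quotients of equal phase. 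I expect no real obstacle here beyond carefully citing this closure property.
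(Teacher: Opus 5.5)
Your proposal is correct and follows the paper's proof essentially step by step. \Cref{thm:main} supplies $D_C$, semistability of $T$ forces equality of phases, additivity gives the phase of $Q_C$, and $D_C$ and $Q_C$ inherit semistability from $T$; the paper checks $Q_C$ by pulling a putative destabilizing subobject $A\subset Q_C$ back to $\widetilde A\subset T$, which is equivalent to your quotient characterization. In Step~3, the fact actually needed is that $Z(Q_C)=Z(T)-Z(D_C)$ is a \emph{positive} real multiple of $e^{i\theta}$: this holds because $Z(Q_C)$ is non-zero (as $Q_C$ is a non-zero object of the heart) and therefore cannot lie in the lower half-plane or on $\R_{>0}$, and your remark about ``different moduli'' is beside this point.
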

A particularly simple consequence occurs on surfaces without negative
integral curves.
\begin{corollary}
\label{cor:no-negative-divisors}
If $X$ contains no integral curve of negative self-intersection, then
every line bundle is $\sigma_{B,\omega}$-stable for every divisorial stability condition $\sigma_{B,\omega}$.
\end{corollary}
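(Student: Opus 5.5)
This follows directly from \Cref{thm:main}: we argue by contraposition and apply part (i) of the appropriate case. Fix a divisorial stability condition $\sigma_{B,\omega}$ and a line bundle $L$. Put $s=s_{B,\omega}(L)$, and let $T\in\{L,L[1]\}$ be the shift lying in $\cA_{B,\omega}$. Suppose $T$ is not $\sigma_{B,\omega}$-stable. By \Cref{thm:main}(a)(i) if $s>0$, or by \Cref{thm:main}(b)(i) if $s\le 0$, there is a non-zero effective Cartier divisor $C$ with $C^2<0$. The phase and slope conditions attached to $C$ are not needed here; only the existence of $C$ with $C^2<0$ matters.

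Next we pass from $C$ to an integral curve. Write $C=\sum_i m_iC_i$ with $m_i>0$ and $C_i$ distinct integral curves. Expanding,
\[
  C^2=\sum_i m_i^2C_i^2+\sum_{i\neq j}m_im_j\,C_i\cdot C_j .
\]
Distinct integral curves meet properly, so $C_i\cdot C_j\ge 0$ for $i\neq j$. Hence the cross terms are non-negative, and $C^2<0$ forces $C_i^2<0$ for some $i$. This is the same observation recorded in the remark following \Cref{thm:main}.

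So $X$ contains an integral curve of negative self-intersection, which contradicts the hypothesis. Therefore $T$ is stable. Stability is shift-invariant by the convention fixed in \Cref{sec:introduction}, so $L$ itself is $\sigma_{B,\omega}$-stable.

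There is no genuine obstacle in this argument. The only points to handle are the reduction from an arbitrary effective divisor to an integral component, which is done above, and the observation that both chambers $s>0$ and $s\le0$ are covered by the two parts of \Cref{thm:main}. In particular, the argument does not need the phase inequality to pass to the component $C_i$.
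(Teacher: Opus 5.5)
Your proposal is correct and follows the paper's argument: you apply the forward implication of \Cref{thm:main}(a)(i) or (b)(i) to obtain a non-zero effective divisor $C$ with $C^2<0$. You then use non-negativity of $C_i\cdot C_j$ for distinct integral components to find an integral curve with $C_i^2<0$, and conclude by shift-invariance of stability. The only difference is the order: the paper first shows that the hypothesis rules out negative effective divisors and then applies the theorem, which is the same argument.
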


The final consequence connects \Cref{thm:main} with the deformed
Hermitian--Yang--Mills equation and the large-scaling comparison studied by
Stoppa and Fan \cite{Stoppa,Fan}. For a line bundle $L$, the $B$-twisted
dHYM equation asks for a smooth representative of $c_1(L)-B$ whose
Lagrangian phase with respect to $\omega$ is constant. See
\cite{JacobYau,CJY} and, for the $B$-twisted convention,
\cite[Definition~2.1]{Fan}.

At a fixed scale, dHYM solvability and Bridgeland stability need not
coincide. On $\operatorname{Bl}_p\mathbb P^2$, Collins and Shi prove that
the existence of a dHYM solution implies Bridgeland stability, while the
converse need not hold
\cite[Theorem~4.15 and the example following it]{CollinsShi}.
A further distinction lies in scaling behavior. The dHYM equation is
invariant under
\[
  (\omega,c_1(L)-B)\longmapsto
  (k\omega,k(c_1(L)-B)),
\]
whereas Bridgeland stability of line bundles is not in general preserved
under the corresponding scaling. We therefore consider
\[
  (B,\omega,L)\longmapsto(kB,k\omega,L^{\otimes k}),
  \qquad k\in\mathbb Z_{>0}.
\]
We call the regime $k\to\infty$ the \emph{large-scaling limit}. For $B=0$,
this was introduced by Stoppa in the toric setting
\cite[Sections~1.2 and~6]{Stoppa}.

In the toric mirror-symmetry setting, the same scaling also suppresses
the quantum and $\widehat{\Gamma}$-class corrections entering the mirror
identification of central charges, leaving at leading order the period
central charge relevant to special Lagrangians
\cite[Remarks~2.1, 2.6(i), and~5.3]{Stoppa}. For background on the
mirror-symmetric origins of the dHYM equation and its supercritical
existence theory, see \cite{LYZ,Chen}.

Set $\alpha:=c_1(L)-B$ and, for an integral curve $C\subset X$, define
\begin{equation}\label{eq:Psi-dHYM-intro}
  \Psi_C(\alpha,\omega)
  :=(C\cdot\omega)(\omega^2-\alpha^2)
    +2(C\cdot\alpha)(\alpha\cdot\omega).
\end{equation}
By \cite[Proposition~8.5]{CJY}, in the $B$-twisted convention of
\cite[Definition~2.1, Theorem~2.2, and Remark~2.3(b)]{Fan}, the
dHYM equation is solvable exactly when
\[
  \Psi_C(\alpha,\omega)>0
\]
for every integral curve $C$. We call $L$
\emph{numerically $B$-twisted dHYM-semistable} if the corresponding weak
inequalities
\[
  \Psi_C(\alpha,\omega)\geq0
\]
hold for every integral curve $C$. Since $\Psi_C(\alpha,\omega)$ is linear
in $C$, the same inequality holds for every effective divisor.

Related effective-curve criteria are developed by Khalid and Sj\"ostr\"om
Dyrefelt \cite{KhalidSjoestroemDyrefelt}. They characterize solvability of
dHYM and more general $Z$-critical equations on compact K\"ahler surfaces by
finitely many such conditions and study the associated destabilizing curves
and wall-chamber structures.

\begin{corollary}
\label{cor:all-scalings}
With the notation above, the following are equivalent.
\begin{enumerate}[label=\textup{(\roman*)}]
  \item The line bundle $L$ is numerically $B$-twisted dHYM-semistable with
  respect to $\omega$.
  \item For every integer $k\geq1$, the object $L^{\otimes k}$ is
  $\sigma_{kB,k\omega}$-stable.
  \item The line bundle $L$ is stable in the large-scaling limit with
  respect to $(B,\omega)$, meaning that $L^{\otimes k}$ is
  $\sigma_{kB,k\omega}$-stable for all sufficiently large integers $k$.
\end{enumerate}
\end{corollary}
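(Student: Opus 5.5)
The plan is to apply \Cref{thm:main} to the scaled data $(kB,k\omega,L^{\otimes k})$. I would first compute every phase inequality appearing there explicitly as a polynomial in $k$, whose leading coefficient turns out to be $\Psi_C(\alpha,\omega)$.

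\emph{Central charges and phase comparison.} For a line bundle $M$ with $\beta=c_1(M)-B$ we have
\[
Z_{B,\omega}(M)=-\tfrac12(\beta^2-\omega^2)+i\,\omega\cdot\beta .
\]
Put $T_k=L^{\otimes k}$ or $T_k=L^{\otimes k}[1]$, according to the sign of
\[
s_k:=s_{kB,k\omega}(L^{\otimes k})=k^2\,\omega\cdot\alpha .
\]
Let $C$ be an effective divisor with $C^2<0$. Both central charges being compared lie in the closed upper half plane, so the condition $\Arg Z(D_C)\ge\Arg Z(T_k)$ is equivalent to $\im\bigl(\ol{Z(T_k)}\,Z(D_C)\bigr)\ge0$.

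In the unshifted case, with $\beta_k=k\alpha-C$ and $\omega_k=k\omega$, this quantity is
\[
-\tfrac{k^3}{2}\Psi_C(\alpha,\omega)+\tfrac{k^2}{2}(\omega\cdot\alpha)C^2 .
\]
In the shifted case we use
\[
Z(L^{\otimes k}(C)|_C)=Z(L^{\otimes k}(C))-Z(L^{\otimes k}),\qquad Z(T_k)=-Z(L^{\otimes k}),
\]
and the quantity becomes
\[
-\tfrac{k^3}{2}\Psi_C-\tfrac{k^2}{2}(\omega\cdot\alpha)C^2 .
\]
In both cases the $k^2$-term is $\le0$, because $C^2<0$ and the sign of $\omega\cdot\alpha$ matches the chamber.

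\emph{(i)$\Rightarrow$(ii).} Assume $\Psi_C\ge0$ for all effective $C$. By the formulas above, a detecting divisor in \Cref{thm:main}(a)(i) or (b)(i) forces both terms to vanish: $\Psi_C=0$ and $(\omega\cdot\alpha)C^2=0$. In the unshifted chamber $\omega\cdot\alpha>0$, so this is impossible. In the shifted chamber it forces $\omega\cdot\alpha=0$. Then the Hodge index theorem gives $\alpha^2\le0$, so
\[
\Psi_C=(C\cdot\omega)(\omega^2-\alpha^2)>0,
\]
again a contradiction. Hence $L^{\otimes k}$ is $\sigma_{kB,k\omega}$-stable for every $k\geq1$, by the forward implications of \Cref{thm:main}.

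\emph{(ii)$\Rightarrow$(iii)} is trivial.

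\emph{(iii)$\Rightarrow$(i).} Suppose $\Psi_C<0$ for some integral curve $C$. I would first show that necessarily $C^2<0$. Write $\Psi_C=C\cdot v$ with
\[
v:=(\omega^2-\alpha^2)\,\omega+2(\alpha\cdot\omega)\,\alpha .
\]
A direct expansion gives
\[
v^2=\omega^2\bigl[(\omega^2-\alpha^2)^2+4(\alpha\cdot\omega)^2\bigr]>0 .
\]
Deforming $\alpha$ linearly to $0$ keeps $v^2>0$ throughout, and at $\alpha=0$ we have $v=(\omega^2)\,\omega$. So $v$ stays in the positive cone. An integral curve with $C^2\ge0$ lies in the closure of the positive cone, hence pairs non-negatively with $v$. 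Therefore $C^2<0$.

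The case $\omega\cdot\alpha=0$ is excluded, since we showed above that $\Psi_C>0$ there. Suppose $\omega\cdot\alpha>0$. The monomorphism condition $k^2\,\omega\cdot\alpha-k\,\omega\cdot C>0$ holds for $k\gg0$. The $k^3$-term $-\tfrac{k^3}{2}\Psi_C$ is positive and dominates, so the phase inequality is strict for $k\gg0$. The converse direction of \Cref{thm:main}(a)(ii), via \Cref{lem:divisorial-subobject-criterion}, then shows $L^{\otimes k}$ is not even semistable. Suppose instead $\omega\cdot\alpha<0$. Then $k^2\,\omega\cdot\alpha+k\,\omega\cdot C\le0$ for $k\gg0$, and (b)(ii) applies in the same way. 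This contradicts (iii).

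The main obstacle I expect is the reduction to $C^2<0$ in (iii)$\Rightarrow$(i): the positivity of $v$ via the Hodge index and continuity argument. I would double-check it carefully, alongside the sign bookkeeping in the shifted chamber.
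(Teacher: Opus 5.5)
Your argument is correct and is essentially the paper's: your determinants are exactly $-\tfrac12\Theta^{\mp}_C(k\alpha,k\omega)=-\tfrac{k^2}{2}\bigl(k\Psi_C(\alpha,\omega)\mp C^2\,(\alpha\cdot\omega)\bigr)$, used with \Cref{thm:main} for (i)$\Rightarrow$(ii) and with large-$k$ asymptotics for (iii)$\Rightarrow$(i). The only real detour is your positive-cone proof that $C^2<0$ in (iii)$\Rightarrow$(i). It is valid, although $v^2>0$ needs the Hodge index theorem to rule out $\alpha\cdot\omega=0=\omega^2-\alpha^2$. It is also unnecessary: for every integral curve $C$, stability of $L^{\otimes k}$ already forbids $L^{\otimes k}(-C)$, resp.\ $L^{\otimes k}(C)|_C$, from having phase at least that of the relevant shift, and the paper concludes directly from this.
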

In particular, the implication from \textup{(i)} to \textup{(ii)} recovers
\cite[Theorem~2.9(b)]{Fan} without assuming Conjecture~2.8. Specializing to
$B=0$ and $\omega=[\omega_0]\in\NS(X)_\R$, our result removes the
Arcara--Miles assumption from the implication
\[
  \text{dHYM solvability on }L^\vee
  \Longrightarrow
  L^{\otimes k}[1]\text{ is }\sigma_{k[\omega_0]}\text{-stable}
\]
in \cite[Proposition~6.2]{Stoppa}. The other implication and the remaining
hypotheses of that proposition are unchanged.

We briefly describe the proof of \Cref{thm:main}. Tensoring by $L^{-1}$
reduces to $L=\cO_X$. In the unshifted chamber, choose a weak or strict
destabilizing subobject $E\hookrightarrow\cO_X$ of minimal positive rank.
Coherent cohomology gives an exact sequence
\[
  0\longrightarrow F\longrightarrow E
  \longrightarrow I_Z(-C)\longrightarrow0.
\]
The slope Harder--Narasimhan filtrations of $E$ and $F$, together with the
Bogomolov--Gieseker inequality, determine ordered Lorentzian vectors whose
cumulative defects record the values of a phase functional $\Lambda$ on
the filtration steps. The partial-sum estimate of
\Cref{thm:ordered-lorentzian-partial-sum} turns the sign conditions imposed
by rank minimality into timelike bounds for the corresponding total vectors.
If $\rk(E)\geq2$, the reverse triangle inequality then produces a strictly
destabilizing rank-one divisorial subobject, contradicting minimality.
Thus $\rk(E)=1$, and \Cref{lem:negative-square} shows that the resulting
non-zero effective divisor $C$ satisfies $C^2<0$. The shifted chamber
follows by derived duality.

The paper is organized as follows. In \Cref{sec:preliminaries} we recall the
tilted heart, characterize the natural divisorial subobjects, and analyze
subobjects of $\cO_X$. The phase functional and the Lorentzian lifts are constructed in \Cref{sec:phase-functional}. The ordered
Lorentzian partial-sum estimate is proved in \Cref{sec:lorentzian}. The
unshifted and shifted chambers are treated in
\Cref{sec:unshifted,sec:shifted}. The numerical criteria and the corollaries
above are proved in \Cref{sec:consequences}.

\subsection*{Acknowledgments}

I am grateful to my advisor, Lars Martin Sektnan, for his guidance and helpful comments on earlier drafts of this paper. I also thank Jacopo Stoppa for helpful comments and suggestions, and Rosa Sena-Dias for a conversation that led me to the problem considered here.

\section{Divisorial subobjects and reduction to the structure sheaf}
\label{sec:preliminaries}

\subsection{Tilted hearts and exactness}

For a torsion-free coherent sheaf $E$ of positive rank, put
\[
  \mu_\omega(E):=\frac{\omega\cdot c_1(E)}{\rk(E)}.
\]
Let $\mu_\omega^+(E)$ and $\mu_\omega^-(E)$ denote the largest and smallest
slopes of the factors in its $\mu_\omega$-Harder--Narasimhan filtration.
On a surface, intersection with the real ample class $\omega$ defines a
non-zero movable numerical curve class in the sense of
\cite[Definition~2.2]{GKP}: movability means non-negativity on effective
Cartier divisors. Strict positivity on non-zero effective divisors follows
separately from the ampleness of $\omega$. Existence and uniqueness
of the Harder--Narasimhan filtration therefore follow from
\cite[Corollary~2.27]{GKP}. We always take the saturated filtration supplied
there. In particular, its factors and every tail quotient are torsion-free.
For an arbitrary coherent sheaf $E$, write $E_{\mathrm{tf}}$ for its maximal
torsion-free quotient.

The torsion pair $(\cT_{B,\omega},\cF_{B,\omega})$ in $\Coh(X)$ is defined by
\begin{align*}
  \cT_{B,\omega}
  &:=\left\{E\in\Coh(X):
  \begin{array}{l}
    E_{\mathrm{tf}}=0,\text{ or}\\[-2pt]
    \mu_\omega^-(E_{\mathrm{tf}})>B\cdot\omega
  \end{array}
  \right\},\\
  \cF_{B,\omega}
  &:=\left\{E\in\Coh(X):
  \begin{array}{l}
    E=0,\text{ or}\\[-2pt]
    E\text{ is torsion-free of positive rank, and}\\[-2pt]
    \mu_\omega^+(E)\leq B\cdot\omega
  \end{array}
  \right\}.
\end{align*}
The tilted heart is
\begin{equation}\label{eq:tilted-heart}
  \cA_{B,\omega}
  :=\langle \cF_{B,\omega}[1],\cT_{B,\omega}\rangle.
\end{equation}
For a line bundle $M$,
\begin{equation}\label{eq:line-bundle-heart-membership}
  \begin{aligned}
    M\in\cA_{B,\omega}
    &\iff \omega\cdot(c_1(M)-B)>0,\\
    M[1]\in\cA_{B,\omega}
    &\iff \omega\cdot(c_1(M)-B)\leq0.
  \end{aligned}
\end{equation}

If $\ch(E)=(r,c,d)$, where $r=\ch_0(E)$, $c=\ch_1(E)$, and
$d=\ch_2(E)$, then expanding \eqref{eq:central-charge-integral} gives
\begin{equation}\label{eq:central-charge-expanded}
  Z_{B,\omega}(E)
  =\left(-d+B\cdot c+\frac r2(\omega^2-B^2)\right)
   +i\left(\omega\cdot c-rB\cdot\omega\right).
\end{equation}
Because $Z_{B,\omega}$ is a stability function on $\cA_{B,\omega}$ (see \cite[Section~2]{AB} and \cite[Section~6]{MacriSchmidt}), every non-zero $E\in\cA_{B,\omega}$
satisfies
\begin{equation}\label{eq:stability-function-range}
  Z_{B,\omega}(E)
  \in\{z\in\C:\im z>0\}\cup\R_{<0}.
\end{equation}

We shall repeatedly use the following consequence of the definition of a tilted heart.

\begin{lemma}
\label{lem:tilt-exactness}
Let $f:A\to A'$ be a morphism of coherent sheaves with $A,A'\in\cT_{B,\omega}$. If
\[
  \ker_{\Coh}(f)\in\cF_{B,\omega}
  \quad\text{and}\quad
  \operatorname{coker}_{\Coh}(f)\in\cT_{B,\omega},
\]
then $f$ is a monomorphism in $\cA_{B,\omega}$, and its quotient in the
tilted heart is the cone of $f$, whose only non-zero coherent cohomology
sheaves are
\[
  H^{-1}(\operatorname{cone}(f))=\ker_{\Coh}(f),
  \qquad
  H^0(\operatorname{cone}(f))=\operatorname{coker}_{\Coh}(f).
\]
In particular, a short exact sequence in $\Coh(X)$ all of whose terms belong
to $\cT_{B,\omega}$ remains short exact in $\cA_{B,\omega}$.
\end{lemma}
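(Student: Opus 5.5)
The plan is to work directly with the standard description of short exact sequences in a heart of a bounded t-structure. Set $K:=\ker_{\Coh}(f)$ and $Q:=\operatorname{coker}_{\Coh}(f)$, and complete $f$ to a distinguished triangle
\[
  A\xrightarrow{\ f\ } A'\longrightarrow \operatorname{cone}(f)\longrightarrow A[1]
\]
in $\Db(X)$. Since $A$ and $A'$ are sheaves, the long exact sequence of cohomology sheaves (with respect to the standard t-structure) shows that $\operatorname{cone}(f)$ has cohomology only in degrees $-1$ and $0$. It also gives
\[
  0\to H^{-1}(\operatorname{cone} f)\to A\xrightarrow{f}A'\to H^0(\operatorname{cone} f)\to 0,
\]
which identifies $H^{-1}(\operatorname{cone}(f))=K$ and $H^0(\operatorname{cone}(f))=Q$. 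This already gives the claimed description of the cohomology sheaves.

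Next I will show that $\operatorname{cone}(f)\in\cA_{B,\omega}$. Recall that, by \eqref{eq:tilted-heart}, the heart consists of exactly those objects $E$ with $H^{i}(E)=0$ for $i\neq -1,0$, $H^{-1}(E)\in\cF_{B,\omega}$ and $H^0(E)\in\cT_{B,\omega}$. This is the standard characterization of the Happel--Reiten--Smalø tilt. It follows from the truncation triangle $H^{-1}(E)[1]\to E\to H^0(E)\to$ together with extension-closedness of $\langle\cF[1],\cT\rangle$. The hypotheses $K\in\cF_{B,\omega}$ and $Q\in\cT_{B,\omega}$ therefore give $\operatorname{cone}(f)\in\cA_{B,\omega}$. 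Moreover $A,A'\in\cT_{B,\omega}\subset\cA_{B,\omega}$.

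With this in place, the triangle $A\to A'\to\operatorname{cone}(f)\to A[1]$ has all three vertices in the heart $\cA_{B,\omega}$. A distinguished triangle whose vertices lie in the heart of a bounded t-structure is the same thing as a short exact sequence in that heart. This is standard: it follows from $\operatorname{Hom}(X,Y[-1])=0$ for $X,Y$ in the heart, and the cohomological functor $H^0_{\cA}$ turns the triangle into a long exact sequence with vanishing outer terms. Hence $0\to A\to A'\to\operatorname{cone}(f)\to0$ is exact in $\cA_{B,\omega}$. So $f$ is a monomorphism there, with cokernel $\operatorname{cone}(f)$.

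For the final assertion, take a short exact sequence $0\to A\to A'\to A''\to 0$ in $\Coh(X)$ with all terms in $\cT_{B,\omega}$, and apply the above to $f:A\to A'$. Then $K=0\in\cF_{B,\omega}$ and $Q=A''\in\cT_{B,\omega}$, and $\operatorname{cone}(f)\cong A''$ because its only cohomology is $H^0=A''$. So the sequence is short exact in $\cA_{B,\omega}$.

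There is no serious obstacle. The only point needing care is to invoke correctly the facts that triangles with vertices in a heart are short exact sequences there, and the cohomological description of the tilted heart. Both are standard and can be cited from \cite{MacriSchmidt} or the original tilting theory.
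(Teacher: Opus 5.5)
Your proposal is correct and follows essentially the same route as the paper. Both proofs read off $H^{-1}$ and $H^0$ of the cone from the long exact cohomology sequence, and both use the cohomological description of the tilted heart (the paper cites \cite[Lemma~6.3]{MacriSchmidt}) to place the cone in $\cA_{B,\omega}$. Both then turn the triangle whose vertices all lie in the heart into a short exact sequence, and both obtain the final assertion by taking $\ker_{\Coh}(f)=0$ and $\operatorname{coker}_{\Coh}(f)=A''$.
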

\begin{proof}
Set $Q:=\operatorname{cone}(f)$. Since $A$ and $A'$ are sheaves placed in
degree zero, the long exact sequence of coherent cohomology for the
triangle
\[
  A\xrightarrow{f}A'\longrightarrow Q\longrightarrow A[1]
\]
reduces to
\[
  0\longrightarrow H^{-1}(Q)\longrightarrow A\xrightarrow{f}A'
  \longrightarrow H^0(Q)\longrightarrow0.
\]
Thus $H^{-1}(Q)=\ker_{\Coh}(f)$,
$H^0(Q)=\operatorname{coker}_{\Coh}(f)$, and $H^i(Q)=0$ for
$i\neq-1,0$.

Recall that the tilted heart \eqref{eq:tilted-heart} has the concrete
description \cite[Lemma~6.3]{MacriSchmidt}
\[
  \cA_{B,\omega}
  =\left\{G\in\Db(X):
  \begin{array}{l}
    H^i(G)=0\text{ for }i\neq-1,0,\\
    H^{-1}(G)\in\cF_{B,\omega},\quad
    H^0(G)\in\cT_{B,\omega}
  \end{array}
  \right\}.
\]
Therefore we have that $Q\in\cA_{B,\omega}$. Moreover,
$A,A'\in\cT_{B,\omega}$ imply that $A$ and $A'$, viewed as complexes in
degree zero, also lie in $\cA_{B,\omega}$. Hence all three terms of the
displayed triangle lie in the heart. Applying the cohomology functors of the
tilted $t$-structure, or equivalently using the standard correspondence
between triangles in a heart and extensions, gives a short exact sequence
\[
  0\longrightarrow A\xrightarrow{f}A'\longrightarrow Q\longrightarrow0
\]
in $\cA_{B,\omega}$. In particular, $f$ is a monomorphism and its quotient
is $Q$.

Finally, if $0\to A\to A'\to A''\to0$ is a short exact sequence in
$\Coh(X)$ with $A,A',A''\in\cT_{B,\omega}$, then the coherent kernel of
$A\to A'$ is zero, which belongs to $\cF_{B,\omega}$, and its coherent
cokernel is $A''\in\cT_{B,\omega}$. The preceding argument gives the claimed
short exact sequence in the tilted heart.
\end{proof}

\subsection{Divisorial subobjects in the tilted heart}

The statement of \Cref{thm:main} picks out two natural morphisms associated
with an effective divisor $C$,
\[
L(-C)\longrightarrow L
\qquad\text{and}\qquad
L(C)|_C\longrightarrow L[1].
\]
For these morphisms to serve as potential destabilizing subobjects, they
must first be monomorphisms in the tilted heart. The following lemma
expresses this categorical condition in terms of the numerical position of
$L$ relative to the slope defining the tilt.

\begin{lemma}
\label{lem:divisorial-subobject-criterion}
Let $L$ be a line bundle, let $C$ be a non-zero effective Cartier divisor,
and put $s=\omega\cdot(c_1(L)-B)$.
\begin{enumerate}[label=\textup{(\alph*)}]
  \item Assume $s>0$. The natural morphism
  \[
    L(-C)\longrightarrow L
  \]
  is a monomorphism in $\cA_{B,\omega}$ if and only if
  \[
    s-\omega\cdot C>0.
  \]
  When this holds, its quotient is $L|_C$.

  \item Assume $s\leq0$. The natural morphism
  \[
    L(C)|_C\longrightarrow L[1]
  \]
  is a monomorphism in $\cA_{B,\omega}$ if and only if
  \[
    s+\omega\cdot C\leq0.
  \]
  When this holds, its quotient is $L(C)[1]$.
\end{enumerate}
\end{lemma}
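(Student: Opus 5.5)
The plan is to treat both parts as direct consequences of the heart-membership criterion \eqref{eq:line-bundle-heart-membership} and \Cref{lem:tilt-exactness}. For (b) I also use the standard fact that for a morphism $f:A\to A'$ between objects of a heart, with cone $K$, the kernel and cokernel in the heart are $H^{-1}_{\cA}(K)$ and $H^{0}_{\cA}(K)$, taken with respect to the tilted $t$-structure.

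\emph{Part (a).} Assume $s>0$, so $L\in\cA_{B,\omega}$. Note that $\omega\cdot(c_1(L(-C))-B)=s-\omega\cdot C$.

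Sufficiency: suppose $s-\omega\cdot C>0$. A line bundle is $\mu_\omega$-stable, so $L(-C)$ and $L$ both lie in $\cT_{B,\omega}$. The map $L(-C)\to L$ is injective as a map of sheaves, so its coherent kernel is $0\in\cF_{B,\omega}$. Its coherent cokernel is the torsion sheaf $L|_C$, which lies in $\cT_{B,\omega}$ because its torsion-free quotient vanishes. \Cref{lem:tilt-exactness} then shows the morphism is a monomorphism in $\cA_{B,\omega}$ with quotient $L|_C$.

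Necessity: suppose $s-\omega\cdot C\leq0$. Then \eqref{eq:line-bundle-heart-membership} gives $L(-C)\notin\cA_{B,\omega}$; indeed its only cohomology sheaf lies in $\cF_{B,\omega}\setminus\{0\}$ in degree $0$. So $L(-C)\to L$ is not a morphism of the heart at all, and in particular not a monomorphism there.

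\emph{Part (b).} Assume $s\leq0$, so $L[1]\in\cA_{B,\omega}$. Since $C\neq0$, the sheaf $L(C)|_C$ is a non-zero torsion sheaf. It therefore lies in $\cT_{B,\omega}\subset\cA_{B,\omega}$, so the morphism is between objects of the heart. Rotating the triangle of the sequence $0\to L\to L(C)\to L(C)|_C\to0$ gives the distinguished triangle
\[
  L(C)|_C\longrightarrow L[1]\longrightarrow L(C)[1]\longrightarrow L(C)|_C[1],
\]
whose first map is the connecting morphism. So the cone of $L(C)|_C\to L[1]$ is $L(C)[1]$, and everything turns on the sign of $\omega\cdot(c_1(L(C))-B)=s+\omega\cdot C$.

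If $s+\omega\cdot C\leq0$, then $L(C)[1]\in\cA_{B,\omega}$ by \eqref{eq:line-bundle-heart-membership}. All three vertices of the triangle then lie in the heart, so the triangle is a short exact sequence there. This gives the monomorphism with quotient $L(C)[1]$, as in the last step of the proof of \Cref{lem:tilt-exactness}.

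If $s+\omega\cdot C>0$, then $L(C)\in\cT_{B,\omega}\subset\cA_{B,\omega}$, so the cone $L(C)[1]$ lies in $\cA_{B,\omega}[1]$. Hence $H^{-1}_{\cA}(\text{cone})=L(C)\neq0$, so the kernel in the heart is non-zero and the morphism is not a monomorphism. Concretely, in this case the original sequence $0\to L\to L(C)\to L(C)|_C\to0$ is itself short exact in the heart, and the relevant kernel is identified with $L(C)$.

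\emph{Main obstacle.} The only delicate point is this necessity direction in (b). It requires the kernel/cokernel-via-cone description for hearts of bounded $t$-structures rather than \Cref{lem:tilt-exactness} alone, since that lemma only covers morphisms between objects of $\cT_{B,\omega}$. I would cite this standard fact, or verify directly that a non-zero morphism $L(C)\to L(C)|_C$ in the heart composes to zero with $f$.
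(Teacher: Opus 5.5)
Your proposal is correct and follows the paper's proof essentially step for step. In (a) you use \Cref{lem:tilt-exactness} for sufficiency and the requirement $L(-C)\in\cA_{B,\omega}$ for necessity; in (b) you rotate the triangle with cone $L(C)[1]$, and your argument that $H^{-1}_{\cA}(\operatorname{cone})=L(C)\neq0$ is just the contrapositive of the paper's remark that the cone of a monomorphism is its quotient and so must lie in $\cA_{B,\omega}$.

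There is one slip in your ``concretely'' aside. When $s\leq0$ the sheaf $L$ lies in $\cF_{B,\omega}$, not in the heart, so $0\to L\to L(C)\to L(C)|_C\to0$ is not short exact in $\cA_{B,\omega}$. What you actually get for $s+\omega\cdot C>0$ is the rotated sequence $0\to L(C)\to L(C)|_C\to L[1]\to0$, which exhibits $L(C)$ as the kernel.
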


\begin{proof}
For (a), the assumption $s>0$ says that $L\in\cT_{B,\omega}$,
so $L$ is an unshifted object of $\cA_{B,\omega}$. Consider the standard
coherent exact sequence
\[
  0\longrightarrow L(-C)\longrightarrow L\longrightarrow L|_C
  \longrightarrow0
\]
associated with the effective Cartier divisor $C$. The quotient $L|_C$ is
a torsion sheaf and hence belongs to $\cT_{B,\omega}$. Moreover,
$L(-C)$ is a line bundle, so
\[
  L(-C)\in\cT_{B,\omega}
  \quad\Longleftrightarrow\quad
  \omega\cdot(c_1(L)-C-B)=s-\omega\cdot C>0.
\]
When this inequality holds, all three sheaves in the coherent sequence lie
in $\cT_{B,\omega}$, so \Cref{lem:tilt-exactness} identifies it with a
short exact sequence in $\cA_{B,\omega}$. Thus $L(-C)\to L$ is a
monomorphism there with quotient $L|_C$.

Conversely, if the natural morphism is a monomorphism in the tilted heart,
then its source $L(-C)$ must itself be an object of that heart. Since it is
a sheaf concentrated in degree zero, we have $L(-C)\in\cT_{B,\omega}$. Equivalently, by
\eqref{eq:line-bundle-heart-membership}, $s-\omega\cdot C>0$.

For (b), the assumption $s\leq0$ gives $L\in\cF_{B,\omega}$ and hence
$L[1]\in\cA_{B,\omega}$. Start with the coherent exact sequence
\[
  0\longrightarrow L\longrightarrow L(C)\longrightarrow L(C)|_C
  \longrightarrow0
\]
and its associated triangle
\[
  L\longrightarrow L(C)\longrightarrow L(C)|_C\longrightarrow L[1].
\]
The inequality $s+\omega\cdot C\leq0$ is equivalent to
$L(C)\in\cF_{B,\omega}$. Thus, under this inequality, both $L[1]$ and
$L(C)[1]$ belong to $\cA_{B,\omega}$, while the torsion sheaf $L(C)|_C$
belongs to $\cT_{B,\omega}\subset\cA_{B,\omega}$. Rotating the last
triangle therefore produces a triangle all of whose terms lie in the
tilted heart, and hence a short exact sequence
\[
  0\longrightarrow L(C)|_C\longrightarrow L[1]
  \longrightarrow L(C)[1]\longrightarrow0
\]
in $\cA_{B,\omega}$. This proves the forward implication and identifies
the quotient with $L(C)[1]$.

Conversely, the natural morphism $L(C)|_C\to L[1]$ is the boundary
morphism in the displayed coherent triangle, whose cone is $L(C)[1]$. If
it is a monomorphism in the tilted heart, this cone is its quotient and
must belong to $\cA_{B,\omega}$. By
\eqref{eq:line-bundle-heart-membership}, this is equivalent to
$s+\omega\cdot C\leq0$.
\end{proof}

\subsection{Reduction to the structure sheaf}

\begin{lemma}
\label{lem:tensor-reduction}
Let $M$ be a line bundle with $m=c_1(M)$. Tensoring by $M^{-1}$ identifies
the stability problem for $\sigma_{B,\omega}$ with that for
$\sigma_{B-m,\omega}$. More precisely,
\begin{align*}
  Z_{B-m,\omega}(E\otimes M^{-1})&=Z_{B,\omega}(E),\\
  E\in\cA_{B,\omega}&\iff E\otimes M^{-1}\in\cA_{B-m,\omega}.
\end{align*}
Under this identification,
\[
  M\mapsto\cO_X,
  \qquad M(-C)\mapsto\cO_X(-C),
  \qquad M(C)|_C\mapsto\cO_X(C)|_C.
\]
\end{lemma}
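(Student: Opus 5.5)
The plan is to verify the two displayed identities directly and then deduce the rest formally. The key observation is that tensoring by a line bundle is an exact autoequivalence of $\Coh(X)$ that shifts first Chern classes of rank-$r$ sheaves by $-rm$. It commutes with shifts in $\Db(X)$.

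\textbf{Central charges.} By multiplicativity of the Chern character,
\[
\ch(E\otimes M^{-1})=\ch(E)\,e^{-m}.
\]
Hence
\[
Z_{B-m,\omega}(E\otimes M^{-1})=-\int_X e^{-(B-m+i\omega)}e^{-m}\ch(E)=-\int_X e^{-(B+i\omega)}\ch(E)=Z_{B,\omega}(E).
\]
This settles the first identity, with no reference to the expanded form \eqref{eq:central-charge-expanded} needed.

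\textbf{Hearts.} For a torsion-free sheaf $E$ of rank $r$ we have $c_1(E\otimes M^{-1})=c_1(E)-rm$, so
\[
\mu_\omega(E\otimes M^{-1})=\mu_\omega(E)-\omega\cdot m.
\]
Tensoring by $M^{-1}$ preserves saturated subsheaves, torsion-freeness and the maximal torsion-free quotient. It therefore carries the $\mu_\omega$-Harder--Narasimhan filtration of $E$ to that of $E\otimes M^{-1}$, with all slopes shifted by $-\omega\cdot m$. In particular
\[
\mu^\pm_\omega(E\otimes M^{-1})=\mu^\pm_\omega(E)-\omega\cdot m.
\]
The threshold moves from $B\cdot\omega$ to $(B-m)\cdot\omega=B\cdot\omega-\omega\cdot m$. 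It follows that $E\in\cT_{B,\omega}$ if and only if $E\otimes M^{-1}\in\cT_{B-m,\omega}$, and likewise for $\cF$. By the cohomological description of the tilted heart recalled in the proof of \Cref{lem:tilt-exactness}, and since $H^i(G\otimes M^{-1})=H^i(G)\otimes M^{-1}$, we get $E\in\cA_{B,\omega}$ if and only if $E\otimes M^{-1}\in\cA_{B-m,\omega}$.

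\textbf{Consequences and identifications.} Since tensoring by $M^{-1}$ is an equivalence of triangulated categories restricting to an equivalence of the two hearts, it preserves short exact sequences in the hearts and hence subobjects. Together with the equality of central charges, this shows that it preserves stability and semistability. Finally,
\[
M\otimes M^{-1}=\cO_X,\qquad M(-C)\otimes M^{-1}=\cO_X(-C),\qquad M(C)|_C\otimes M^{-1}=\cO_X(C)\otimes\cO_C=\cO_X(C)|_C.
\]
The natural maps are sent to the natural maps, including the connecting morphism, since the defining short exact sequence is tensored termwise.

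\textbf{Main obstacle.} The only point requiring care is the compatibility of Harder--Narasimhan filtrations with twisting. This is immediate because twisting is an exact equivalence preserving rank and shifting all slopes by the same constant; uniqueness of the filtration from \cite[Corollary~2.27]{GKP} then gives the claim.
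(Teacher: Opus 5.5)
Your proposal is correct and takes the same route as the paper. The paper likewise uses $\ch(E\otimes M^{-1})=e^{-m}\ch(E)$ for the central charges, identifies the torsion pairs and hence the hearts through the uniform slope shift by $-m\cdot\omega$, and treats the final identifications as tautological; you just spell out the Harder--Narasimhan and cohomology-sheaf details that the paper leaves implicit.
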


\begin{proof}
Since $\ch(E\otimes M^{-1})=e^{-m}\ch(E)$,
\[
  e^{-(B-m+i\omega)}\ch(E\otimes M^{-1})
  =e^{-(B+i\omega)}\ch(E),
\]
which proves the identity of central charges. Tensoring shifts every slope
by $-m\cdot\omega$, so it identifies the two slope torsion pairs and hence
the two tilted hearts. The final assertions are tautological.
\end{proof}

By \Cref{lem:tensor-reduction}, it is enough to prove \Cref{thm:main} for $L=\cO_X$.

\subsection{Structure of arbitrary subobjects of the structure sheaf}

The following standard description is implicit in the rank-reduction
arguments of \cite[Section~4]{AM}, and we include the details needed below.

\begin{lemma}
\label{lem:subobject-structure}
Assume $\cO_X\in\cA_{B,\omega}$, and let
\begin{equation}\label{eq:heart-subobject-sequence}
  0\longrightarrow E\longrightarrow\cO_X\longrightarrow Q\longrightarrow0
\end{equation}
be a short exact sequence in $\cA_{B,\omega}$ with $E\neq0$. Then:
\begin{enumerate}[label=\textup{(\roman*)}]
  \item $E$ is a torsion-free coherent sheaf in $\cT_{B,\omega}$ and has positive rank.
  \item If $F:=H^{-1}(Q)$, then $F$ is a torsion-free sheaf in
  $\cF_{B,\omega}$, while $H^0(Q)$ is a torsion sheaf in
  $\cT_{B,\omega}$.
  \item There exist an effective Cartier divisor $C$, possibly zero, and a
  zero-dimensional subscheme $Z\subset X$ such that
  \begin{equation}\label{eq:structure-sheaf-sequence}
    0\longrightarrow F\longrightarrow E
      \longrightarrow I_Z(-C)\longrightarrow0
  \end{equation}
  is exact in $\Coh(X)$.
  \item The sheaf $I_Z(-C)$ belongs to $\cT_{B,\omega}$, and
  \[
    \rk(F)=\rk(E)-1.
  \]
\end{enumerate}
\end{lemma}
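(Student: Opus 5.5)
The plan is to apply the long exact sequence of coherent cohomology sheaves to the short exact sequence \eqref{eq:heart-subobject-sequence}, which is a distinguished triangle $E\to\cO_X\to Q\to E[1]$ in $\Db(X)$ with all three terms in $\cA_{B,\omega}$. Since $\cO_X$ is a sheaf in degree zero, the sequence reads
\[
  0\to H^{-1}(E)\to 0\to H^{-1}(Q)\to H^0(E)\to\cO_X\to H^0(Q)\to0 .
\]
Hence $H^{-1}(E)=0$, so $E=H^0(E)$ is a sheaf in $\cT_{B,\omega}$. Next, $F=H^{-1}(Q)$ lies in $\cF_{B,\omega}$ by the concrete description of the heart used in the proof of \Cref{lem:tilt-exactness}. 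This gives the first half of (ii), and in particular $F$ is torsion-free.

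To finish (i), I will show that $E$ is torsion-free of positive rank. Let $K$ be the image of $E\to\cO_X$; it is a subsheaf of $\cO_X$ and so is either $0$ or of rank one. The torsion subsheaf $E_{\mathrm{tor}}$ maps to zero in the torsion-free sheaf $\cO_X$, so it lies in $F$; since $F$ is torsion-free, $E_{\mathrm{tor}}=0$. If $\rk E=0$, then $E$ would be a torsion-free sheaf of rank zero, hence zero, contradicting $E\neq0$. So $\rk E>0$.

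For the second half of (ii) and for (iii), I first show $K\neq0$. If $K=0$, then $E\cong F$ would lie in $\cT_{B,\omega}\cap\cF_{B,\omega}=0$, which is impossible. Since $K$ is a non-zero subsheaf of $\cO_X$, it is an ideal sheaf of rank one, so $H^0(Q)=\cO_X/K$ is torsion; it lies in $\cT_{B,\omega}$ by the heart description. On a smooth surface every ideal sheaf can be written as $K=I_Z(-C)$, where $C$ is the effective Cartier divisor given by the divisorial part and $Z$ is zero-dimensional. To see this, note that the reflexive hull $K^{\vee\vee}\subset\cO_X$ is a line bundle $\cO_X(-C)$ with $C$ effective, and $K^{\vee\vee}/K$ has finite support. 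Then $0\to F\to E\to K\to0$ is exactly \eqref{eq:structure-sheaf-sequence}.

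For (iv), rank additivity gives $\rk F=\rk E-1$. It remains to show $I_Z(-C)\in\cT_{B,\omega}$, which is the only slightly delicate step. The class $\cT_{B,\omega}$ is closed under quotients in $\Coh(X)$: a torsion-free quotient of a sheaf in $\cT$ has all HN slopes above the threshold, because $\mu^-$ can only increase under passage to torsion-free quotients. Since $I_Z(-C)$ is a quotient of $E\in\cT_{B,\omega}$, it belongs to $\cT_{B,\omega}$. Equivalently, one can check directly that $\mu_\omega(I_Z(-C))=-\omega\cdot C>B\cdot\omega$, using $\mu^-_\omega(E)\le\mu_\omega$ of any torsion-free quotient. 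I expect this quotient-closure fact to be the main point needing justification. It is standard for torsion pairs, since the torsion class of any torsion pair is closed under quotients, so I will simply cite it.
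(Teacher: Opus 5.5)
Your proof is correct and follows essentially the same route as the paper: the long exact cohomology sequence of the triangle, the identification of the image of $E\to\cO_X$ with $I_Z(-C)$ via the reflexive hull, and quotient-closure of $\cT_{B,\omega}$ for (iv). The differences are minor. You show the image is non-zero via $\cT_{B,\omega}\cap\cF_{B,\omega}=0$, and you get torsion-freeness of $E$ by pushing $E_{\mathrm{tor}}$ into $F$. The paper instead argues that a zero map cannot be a monomorphism, and that an extension of torsion-free sheaves is torsion-free.
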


\begin{proof}
Taking coherent cohomology of \eqref{eq:heart-subobject-sequence} gives
\begin{equation}\label{eq:cohomology-subobject}
  0\longrightarrow F:=H^{-1}(Q)\longrightarrow H^0(E)
  \longrightarrow\cO_X\longrightarrow H^0(Q)\longrightarrow0,
\end{equation}
while $H^{-1}(E)=0$. Thus $E=H^0(E)$ is a coherent sheaf in
$\cT_{B,\omega}$ and $F\in\cF_{B,\omega}$.

The image $I$ of $E\to\cO_X$ is a non-zero rank-one torsion-free subsheaf
of $\cO_X$. It is non-zero because a torsion sheaf admits no non-zero map
to the torsion-free sheaf $\cO_X$, whereas the zero morphism cannot be a
monomorphism in the heart unless $E=0$. Hence $E$ has positive rank. On a
smooth surface, every rank-one torsion-free subsheaf of $\cO_X$ is of the
form
\[
  I=I_Z(-C)
\]
for an effective Cartier divisor $C$, possibly zero, and a zero-dimensional
subscheme $Z$. Indeed, $I^{\vee\vee}$ is a line bundle contained in $\cO_X$,
hence equals $\cO_X(-C)$ for an effective Cartier divisor $C$ (possibly zero),
and
$I^{\vee\vee}/I$ is zero-dimensional. Since $I$ has rank one,
$H^0(Q)$ is a rank-zero quotient of $\cO_X$, hence a torsion sheaf. The
first three non-zero terms of \eqref{eq:cohomology-subobject} now yield
\eqref{eq:structure-sheaf-sequence}.

Both $F$ and $I_Z(-C)$ are torsion-free, so their extension $E$ is
torsion-free. Since $I_Z(-C)$ is a coherent quotient of $E$ and
$\cT_{B,\omega}$ is quotient-closed, it belongs to $\cT_{B,\omega}$. The
torsion sheaf $H^0(Q)$ belongs to $\cT_{B,\omega}$ by definition. Finally,
the rank identity follows from \eqref{eq:structure-sheaf-sequence}.
\end{proof}

\section{The phase functional and Lorentzian lifts}
\label{sec:phase-functional}

We now develop the numerical and Lorentzian tools needed to prove the
forward implications of \Cref{thm:main}(a). By the tensor reduction of
\Cref{lem:tensor-reduction}, it is enough to consider $L=\cO_X$. The
unshifted chamber is then characterized by
\[
  \cO_X\in\cA_{B,\omega},
  \qquad\text{equivalently}\qquad B\cdot\omega<0.
\]
We first encode phase comparison with the structure sheaf by a linear
functional. We then use the Bogomolov--Gieseker inequality to realize its
values on Harder--Narasimhan factors as Lorentzian square defects. These
constructions will be used in \Cref{sec:lorentzian,sec:unshifted}. The
shifted chamber will be treated separately in \Cref{sec:shifted}.
\subsection{Phase comparison and normalization}

Set the real numbers
\[
  h:=\omega^2>0,
  \qquad p:=-B\cdot\omega>0,
  \qquad q:=\frac{h-B^2}{2p},
\]
and define the numerical divisor classes
\[
  \Gamma:=-B+q\omega,
  \qquad e:=\frac{\omega}{\sqrt h}
\]
in $\NS(X)_\R$. Here products and squares of divisor classes are taken with
respect to the intersection pairing. Since $2pq=h-B^2$, a direct calculation
gives
\[
  \Gamma^2
  = (1+q^2)h>0,
  \qquad e^2=1.
\]
We may therefore set the real numbers
\[
  a:=\sqrt{\Gamma^2}=\sqrt{(1+q^2)h}>0,
  \qquad \tau:=q\sqrt h.
\]
Then $\tau^2=q^2h<a^2$, and hence $\abs{\tau}<a$. Moreover,
\begin{equation}\label{eq:Z-O}
  Z_{B,\omega}(\cO_X)=p(q+i).
\end{equation}

For a numerical Chern character $v=(r,c,d)$, define the linear functional
\begin{equation*}
  \Lambda(v):=d+\Gamma\cdot c.
\end{equation*}
We write $\Lambda(E):=\Lambda(\ch(E))$.

\begin{lemma}
Assume $B\cdot\omega<0$. For every non-zero $E\in\cA_{B,\omega}$,
\begin{align}
  \Arg Z_{B,\omega}(E)\geq\Arg Z_{B,\omega}(\cO_X)
  &\quad\Longleftrightarrow\quad \Lambda(E)\geq0,
  \label{eq:phase-lambda-equivalence}\\
  \Arg Z_{B,\omega}(E)>\Arg Z_{B,\omega}(\cO_X)
  &\quad\Longleftrightarrow\quad \Lambda(E)>0,
  \label{eq:strict-phase-lambda-equivalence}\\
  \Arg Z_{B,\omega}(E)=\Arg Z_{B,\omega}(\cO_X)
  &\quad\Longleftrightarrow\quad \Lambda(E)=0.\notag
\end{align}
\end{lemma}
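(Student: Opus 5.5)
Since $Z_{B,\omega}(\cO_X)=p(q+i)$ with $p>0$, and both $Z(E)$ and $Z(\cO_X)$ lie in the half-plane region \eqref{eq:stability-function-range} where $\Arg$ takes values in $(0,\pi]$, phase comparison is governed by the sign of the cross product. Concretely, for $z,w$ in $\{\im z>0\}\cup\R_{<0}$ with $w$ in the open upper half-plane, one has $\Arg z\geq\Arg w$ if and only if $\im(\bar w z)\geq 0$, with strictness and equality matching. The map $z\mapsto\Arg z$ is strictly increasing along the ray directions in $(0,\pi]$, and $\im(\bar w z)=\abs{w}\abs{z}\sin(\Arg z-\Arg w)$, where $\Arg z-\Arg w\in(-\pi,\pi)$ because $\Arg w\in(0,\pi)$. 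The case $z\in\R_{<0}$ has $\Arg z=\pi>\Arg w$ and $\im(\bar w z)=-\im(w)\cdot z>0$, so it is consistent. The plan is therefore to compute $\im\bigl(\overline{Z(\cO_X)}\,Z(E)\bigr)$ and show it equals $p\,\Lambda(E)$.

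Writing $\ch(E)=(r,c,d)$ and using \eqref{eq:central-charge-expanded}, set $Z(E)=x+iy$ with $x=-d+B\cdot c+\tfrac r2(h-B^2)$ and $y=\omega\cdot c+rp$. Then
\[
  \im\bigl(p(q-i)(x+iy)\bigr)=p(qy-x).
\]
Next I expand:
\[
  qy-x=q\,\omega\cdot c+rpq+d-B\cdot c-\tfrac r2(h-B^2).
\]
The rank terms cancel precisely because $2pq=h-B^2$, which leaves $d+(-B+q\omega)\cdot c=\Lambda(E)$. This yields the identity
\[
  \im\bigl(\overline{Z(\cO_X)}Z(E)\bigr)=p\,\Lambda(E).
\]
Since $p>0$, all three equivalences follow from the sine formula above. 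The only point needing care is that the angle difference stays in $(-\pi,\pi)$, so that the sign of the sine faithfully reflects the order of arguments. I expect no real obstacle. The main thing to verify is the cancellation of the $r$-terms, which is exactly the reason $q$ was defined as it was.
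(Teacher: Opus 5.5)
Your proposal is correct and follows essentially the same route as the paper. Both compute $\im\bigl(Z(E)\,\overline{Z(\cO_X)}\bigr)=p(qy-x)=p\,\Lambda(E)$, with the rank terms cancelling via $2pq=h-B^2$, and then read off the three equivalences from $\im(z\bar z_0)=\abs{z}\abs{z_0}\sin(\theta-\theta_0)$ with $\theta-\theta_0\in(-\pi,\pi)$.
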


\begin{proof}
Put
\[
  z:=Z_{B,\omega}(E),
  \qquad
  z_0:=Z_{B,\omega}(\cO_X).
\]
Both numbers are non-zero: this follows from
\eqref{eq:stability-function-range}, since $E$ and $\cO_X$ are non-zero
objects of $\cA_{B,\omega}$. Write $z=x+iy$. By
\eqref{eq:central-charge-expanded},
\begin{align*}
  x&=-d+B\cdot c+rpq,\\
  y&=\omega\cdot c+rp.
\end{align*}
Using \eqref{eq:Z-O},
\begin{align*}
  \im(z\ol{z_0})
  &=p(yq-x)\\
  &=p\bigl(d+(q\omega-B)\cdot c\bigr)\\
  &=p\Lambda(E).
\end{align*}
By \eqref{eq:stability-function-range}, there are uniquely determined
angles
\[
  \theta:=\Arg z\in(0,\pi],
  \qquad
  \theta_0:=\Arg z_0\in(0,\pi].
\]
Consequently $\theta-\theta_0\in(-\pi,\pi)$, and
\[
  \im(z\ol{z_0})
  =\abs z\,\abs{z_0}\sin(\theta-\theta_0).
\]
On the interval $(-\pi,\pi)$, $\sin t$ has the same sign as $t$ and
vanishes only at $t=0$. Hence
\[
  \operatorname{sgn}\Lambda(E)
  =\operatorname{sgn}(\theta-\theta_0),
\]
because $p>0$. This proves the weak, strict, and equality equivalences.
\end{proof}

For an effective divisor $C$,
\begin{equation}\label{eq:Lambda-line-divisor}
  \Lambda(\cO_X(-C))
  =\frac{C^2}{2}-\Gamma\cdot C
  =\frac{(\Gamma-C)^2-\Gamma^2}{2}.
\end{equation}
If $Z$ is zero-dimensional of length $\ell(Z)$, then
\begin{equation}\label{eq:Lambda-ideal-divisor}
  \Lambda(I_Z(-C))
  =\Lambda(\cO_X(-C))-\ell(Z).
\end{equation}

The following example shows that the term involving $\ell(Z)$ in
\eqref{eq:Lambda-ideal-divisor} can genuinely occur, and also illustrates why
\Cref{thm:main} gives a detection result rather than a classification of all
destabilizing subobjects.

\begin{example}
\label{ex:non-line-bundle-destabilizer}
Let $X=\operatorname{Bl}_{p_0}\mathbb P^2$, let $H$ be the pullback of a line,
let $E$ be the exceptional curve, and choose a closed point
$x_0\notin E$. Set
\[
  \omega=H-\frac12E,
  \qquad
  B=-7H.
\]
Then $\omega$ is ample and $B\cdot\omega=-7$. The sheaves
$\cO_X$, $\cO_X(-E)$, and $I_{x_0}(-E)$ all belong to
$\cT_{B,\omega}$, since their slopes are respectively $0$, $-1/2$, and
$-1/2$, all strictly larger than $-7$. The coherent inclusions
\[
  I_{x_0}(-E)\subset\cO_X(-E)\subset\cO_X
\]
have torsion cokernels. By \Cref{lem:tilt-exactness}, both
$I_{x_0}(-E)$ and $\cO_X(-E)$ are therefore subobjects of $\cO_X$ in
$\cA_{B,\omega}$.

In the notation above,
\[
  h=\frac34,
  \qquad p=7,
  \qquad q=-\frac{193}{56},
  \qquad
  \Gamma=\frac{199}{56}H+\frac{193}{112}E.
\]
Using $H^2=1$, $H\cdot E=0$, $E^2=-1$, and the fact that $x_0$ has
length one, equations \eqref{eq:Lambda-line-divisor} and
\eqref{eq:Lambda-ideal-divisor} give
\[
  \Lambda(\cO_X(-E))=\frac{137}{112},
  \qquad
  \Lambda(I_{x_0}(-E))=\frac{25}{112}>0.
\]
Hence \eqref{eq:strict-phase-lambda-equivalence} shows that
$I_{x_0}(-E)$ strictly destabilizes $\cO_X$. Moreover,
\[
  Z_{B,\omega}(\cO_X(-E))=-\frac{189}{8}+\frac{13}{2}i,
  \qquad
  Z_{B,\omega}(I_{x_0}(-E))=-\frac{181}{8}+\frac{13}{2}i.
\]
The two central charges have the same positive imaginary part, and the
first has smaller real part. Therefore,
\[
  \Arg Z_{B,\omega}(\cO_X(-E))
  >\Arg Z_{B,\omega}(I_{x_0}(-E))
  >\Arg Z_{B,\omega}(\cO_X).
\]
Thus $I_{x_0}(-E)$ is a strict destabilizing subobject that is not locally
free, while its reflexive hull
\[
  \bigl(I_{x_0}(-E)\bigr)^{\vee\vee}=\cO_X(-E)
\]
is a divisorial subobject of still larger phase.
\end{example}

\subsection{Lorentzian lifts of slope-semistable sheaves}
\label{sec:bogomolov}

Let $G$ be a torsion-free $\mu_\omega$-semistable sheaf of rank $n>0$, and write
\[
  \ch(G)=(n,c,d).
\]
Its discriminant is
\begin{equation*}
  \Delta(G):=c^2-2nd.
\end{equation*}
Viewing $\omega$ as the corresponding non-zero movable curve class, the
Bogomolov--Gieseker inequality \cite[Theorem~5.1]{GKP} gives
\begin{equation*}
  \Delta(G)\geq0.
\end{equation*}
This agrees with the discriminant convention in \cite{GKP}, since
\[
  c_1(G)^2-2n\ch_2(G)
  =2n c_2(G)-(n-1)c_1(G)^2.
\]

Define
\begin{equation*}
  U_G:=\Gamma+\frac cn\in\NS(X)_\R.
\end{equation*}
Then
\begin{equation}\label{eq:UG-Lambda}
  \frac{2\Lambda(G)}n
  =U_G^2-\Gamma^2-\frac{\Delta(G)}{n^2}.
\end{equation}

Let
\[
  \wt V:=\NS(X)_\R\oplus\R\eps,
  \qquad \eps^2=-1,
  \qquad \eps\perp\NS(X)_\R.
\]
By the Hodge index theorem \cite[Theorem~V.1.9]{Hartshorne}, $\NS(X)_\R$ has signature $(1,\rho(X)-1)$, so
$\wt V$ has signature $(1,\rho(X))$. Extend $e$ by declaring
$e\cdot\eps=0$, and define the Lorentzian lift of $G$ to be
\begin{equation}\label{eq:augmented-vector}
  \wt U_G:=U_G+\frac{\sqrt{\Delta(G)}}n\eps.
\end{equation}
The Bogomolov inequality ensures that this lift is real. Equations
\eqref{eq:UG-Lambda} and \eqref{eq:augmented-vector} give
\begin{equation}\label{eq:augmented-square}
  \wt U_G^2=a^2+\frac{2\Lambda(G)}n.
\end{equation}
For $\wt x\in\wt V$, we call $e\cdot\wt x$ its time coordinate. The time
coordinate of $\wt U_G$ is
\begin{align*}
  e\cdot\wt U_G
  &=\frac{\Gamma\cdot\omega+\mu_\omega(G)}{\sqrt h}\\
  &=\frac{p+qh+\mu_\omega(G)}{\sqrt h}.
\end{align*}
Because $B\cdot\omega=-p$, we obtain the comparison
\begin{equation}\label{eq:time-threshold}
  \begin{aligned}
  \mu_\omega(G)>B\cdot\omega&\quad\Longrightarrow\quad
  e\cdot\wt U_G>\tau,\\
  \mu_\omega(G)\leq B\cdot\omega&\quad\Longrightarrow\quad
  e\cdot\wt U_G\leq\tau.
  \end{aligned}
\end{equation}
The displayed formula also shows that Harder--Narasimhan slope ordering gives
decreasing time coordinates. Since $\abs{\tau}<a$,
\eqref{eq:time-threshold} yields
$e\cdot\wt U_G>-a$ when $\mu_\omega(G)>B\cdot\omega$ and
$e\cdot\wt U_G<a$ when $\mu_\omega(G)\leq B\cdot\omega$. These are the
ordering and one-sided bounds used in the next section.

We use the elementary observation that projection from $\wt V$ to
$\NS(X)_\R$ can only increase the square. Namely, if $\wt x=x+s\eps$,
then
\begin{equation}\label{eq:projection-increases-square}
  x^2=\wt x^2+s^2\geq\wt x^2,
\end{equation}
and the time coordinate is unchanged.

\section{An ordered Lorentzian partial-sum estimate}
\label{sec:lorentzian}

For background on Lorentzian geometry, see, for example, \cite[Chapter~5]{ONeill}. We use the sign convention in which timelike
vectors have positive square.

Let $V$ be a real vector space with a non-degenerate symmetric form of
signature $(1,N)$, and fix $e\in V$ with $e^2=1$. Every $x\in V$ has a
unique decomposition
\[
  x=t e+\xi,
  \qquad t=e\cdot x,
  \qquad \xi\in e^\perp.
\]
We write
\[
  \norm{\xi}:=\sqrt{-\xi^2}.
\]
We call $x$ \emph{causal} if $x^2\geq0$ and \emph{timelike} if
$x^2>0$. A non-zero causal vector is \emph{future-pointing}, respectively
\emph{past-pointing}, when its time coordinate $e\cdot x$ is positive,
respectively negative.

Fix $a>0$, let $m\geq1$, let $x_1,\dots,x_m\in V$, let
$n_1,\dots,n_m>0$, and put
\[
  X:=\sum_{i=1}^m n_i x_i.
\]
If the vectors $x_i$ all belonged to the same time cone and satisfied the
termwise bounds $x_i^2\geq a^2$, the reverse triangle inequality would
immediately give
\[
  X^2\geq a^2\left(\sum_{i=1}^m n_i\right)^2.
\]

The Harder--Narasimhan factors arising in our application provide weaker
information: their time coordinates are ordered, but the square defects
$x_i^2-a^2$ are controlled only through certain partial sums. In
particular, some individual defects may be negative. The estimate proved
below shows that the appropriate partial-sum conditions, together with the
ordering and a one-sided bound on the time coordinates, still force the same
quadratic lower bound for $X$ and determine whether $X$ is future- or past-pointing.

\subsection{Ordered convexity and the lower null coordinate}
\label{subsec:ordered-convexity}

To turn these partial-sum conditions into the required lower bound for the weighted sum of the lower null coordinates, we use convexity and Abel summation. We first record the elementary convexity statement behind this argument and then apply it using the ordering of the supporting slopes induced by the time coordinates.

\begin{lemma}
\label{lem:ordered-convex-comparison}
Let $m\geq1$, let $n_1,\dots,n_m>0$, and let
$b,s_1,\dots,s_m\in\R$. For each $i$, let $I_i\subseteq\R$ be an interval
containing $b$ and $s_i$, and let $f_i:I_i\to\R$ be convex and
differentiable at $b$. Put
\[
  d_i:=n_i(s_i-b),
  \qquad
  D_j:=\sum_{i=1}^j d_i,
  \qquad
  q_i:=f_i'(b).
\]
Then
\begin{equation}\label{eq:ordered-supporting-line-bound}
  \sum_{i=1}^m n_i\bigl(f_i(s_i)-f_i(b)\bigr)
  \geq
  D_mq_m+\sum_{j=1}^{m-1}D_j(q_j-q_{j+1}).
\end{equation}
Consequently, suppose that $q_m\geq0$ and that one of the following sets of
hypotheses holds:
\begin{enumerate}[label=\textup{(\alph*)}]
  \item $q_1\leq\cdots\leq q_m$, $D_j\leq0$ for $j<m$, and $D_m\geq0$.
  \item $q_1\geq\cdots\geq q_m$ and $D_j\geq0$ for every $j$.
\end{enumerate}
Then
\begin{equation*}
  \sum_{i=1}^m n_i f_i(s_i)
  \geq
  \sum_{i=1}^m n_i f_i(b).
\end{equation*}
\end{lemma}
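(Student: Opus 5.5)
The plan is to use the supporting-line inequality for convex functions termwise, then reorganize the resulting linear expression by Abel summation. Since $f_i$ is convex on $I_i$ and differentiable at the interior-or-endpoint point $b$, we have
\[
  f_i(s_i)-f_i(b)\geq q_i(s_i-b).
\]
If $b$ is an endpoint of $I_i$, we use the one-sided derivative, which coincides with $f_i'(b)$ by differentiability. Multiplying by $n_i>0$ and summing gives
\[
  \sum_{i=1}^m n_i\bigl(f_i(s_i)-f_i(b)\bigr)\geq\sum_{i=1}^m q_id_i .
\]

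Next, write $d_i=D_i-D_{i-1}$ with $D_0:=0$. Summation by parts then gives
\[
  \sum_{i=1}^m q_i(D_i-D_{i-1})=D_mq_m+\sum_{j=1}^{m-1}D_j(q_j-q_{j+1}),
\]
which is exactly the right-hand side of \eqref{eq:ordered-supporting-line-bound}. This step is routine: we reindex $\sum_i q_iD_{i-1}=\sum_{j=1}^{m-1}q_{j+1}D_j$.

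For the consequence, it suffices to show that the right-hand side of \eqref{eq:ordered-supporting-line-bound} is non-negative.

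\begin{itemize}
  \item In case (a), $D_mq_m\geq0$ because both factors are non-negative. For $j<m$ we have $D_j\leq0$ and $q_j-q_{j+1}\leq0$, so each product $D_j(q_j-q_{j+1})$ is non-negative.
  \item In case (b), $D_m\geq0$ and $q_m\geq0$ again give $D_mq_m\geq0$. For $j<m$ we have $D_j\geq0$ and $q_j-q_{j+1}\geq0$, so each term is non-negative.
\end{itemize}

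Moving $\sum_i n_if_i(b)$ to the right-hand side then gives the stated inequality.

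There is no serious obstacle here. The only points needing care are the supporting-line inequality when $b$ lies at an endpoint of $I_i$, and the correct index bookkeeping in the Abel summation.
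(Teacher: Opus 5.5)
Your proposal is correct and matches the paper's proof step for step. Both use the supporting-line inequality for each convex $f_i$ at $b$, weighted summation with the $n_i$, Abel summation with $D_0=0$, and a termwise sign check in cases (a) and (b). Your remark that a one-sided derivative at an endpoint of $I_i$ still gives a supporting line is a valid detail that the paper leaves implicit.
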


\begin{proof}
Since $f_i$ is convex and differentiable at $b$, its tangent line at $b$
is a supporting line. Hence, for each $i$,
\[
  f_i(s_i)\geq f_i(b)+q_i(s_i-b).
\]
Multiplying by $n_i$ and summing over $i$ therefore gives
\[
  \sum_{i=1}^m n_i\bigl(f_i(s_i)-f_i(b)\bigr)
  \geq \sum_{i=1}^m n_iq_i(s_i-b)
  =\sum_{i=1}^m d_iq_i.
\]
Set $D_0:=0$. Since $d_i=D_i-D_{i-1}$, Abel summation may be written
explicitly as
\begin{equation*}
  \begin{aligned}
    \sum_{i=1}^m d_iq_i
    &=\sum_{i=1}^m(D_i-D_{i-1})q_i\\
    &=D_mq_m+\sum_{j=1}^{m-1}D_j(q_j-q_{j+1}).
  \end{aligned}
\end{equation*}
Combining the last two displays proves
\eqref{eq:ordered-supporting-line-bound}.

It remains to check the signs in the two cases. In case~\textup{(a)}, the
sequence $(q_j)$ is non-decreasing, so $q_j-q_{j+1}\leq0$. Since
$D_j\leq0$ for $j<m$, every product
$D_j(q_j-q_{j+1})$ is non-negative. The remaining term $D_mq_m$ is also
non-negative because $D_m\geq0$ and $q_m\geq0$. In case~\textup{(b)}, the
sequence $(q_j)$ is non-increasing, so $q_j-q_{j+1}\geq0$, while every
$D_j$ is non-negative. Again all terms on the right-hand side of
\eqref{eq:ordered-supporting-line-bound}, including $D_mq_m$, are
non-negative. Thus in either case the left-hand side of
\eqref{eq:ordered-supporting-line-bound} is non-negative, which is exactly
the asserted inequality.
\end{proof}

For $t\geq a$ and $s\leq t^2$, set
\begin{equation*}
  \ell(t,s):=t-\sqrt{t^2-s}.
\end{equation*}
For a vector $x=te+\xi$ of square $s$, this is the lower null coordinate of
the radial vector $(t,\norm{\xi})$. For $t>a$ and $s<t^2$,
\begin{equation*}
  \frac{\partial\ell}{\partial s}(t,s)
  =\frac{1}{2\sqrt{t^2-s}},
  \qquad
  \frac{\partial^2\ell}{\partial s^2}(t,s)
  =\frac{1}{4(t^2-s)^{3/2}}>0.
\end{equation*}
In particular, $s\mapsto\ell(t,s)$ is increasing and convex. At the base
point $s=a^2$, its supporting slope is
\begin{equation}\label{eq:null-baseline-slope}
  q_a(t):=\frac{1}{2\sqrt{t^2-a^2}},
\end{equation}
which is decreasing in $t\in(a,\infty)$. In the future case, the time coordinates form a non-increasing sequence, so the corresponding supporting slopes form a non-decreasing sequence. Together with the signs of the partial sums of the defects, this is exactly \Cref{lem:ordered-convex-comparison}(a). In the past case, the reversed ordering and partial-sum signs give \Cref{lem:ordered-convex-comparison}(b).

\begin{lemma}
\label{lem:null-coordinate-comparison}
Fix $a>0$, let $m\geq1$, $n_1,\dots,n_m>0$, and let
$s_1,\dots,s_m\in\R$. Put
\[
  D_j:=\sum_{i=1}^j n_i(s_i-a^2).
\]
\begin{enumerate}[label=\textup{(\alph*)}]
  \item Suppose
  \[
    t_1\geq\cdots\geq t_m\geq a,
    \qquad s_i\leq t_i^2,
    \qquad D_j\leq0\ (j<m),
    \qquad D_m\geq0.
  \]
  Then
  \begin{equation}\label{eq:null-comparison-future}
    \sum_{i=1}^m n_i\ell(t_i,s_i)
    \geq
    \sum_{i=1}^m n_i\ell(t_i,a^2).
  \end{equation}

  \item Suppose
  \[
    a\leq u_1\leq\cdots\leq u_m,
    \qquad s_i\leq u_i^2,
    \qquad D_j\geq0\quad(1\leq j\leq m).
  \]
  Then
  \begin{equation}\label{eq:null-comparison-past}
    \sum_{i=1}^m n_i\ell(u_i,s_i)
    \geq
    \sum_{i=1}^m n_i\ell(u_i,a^2).
  \end{equation}
\end{enumerate}
\end{lemma}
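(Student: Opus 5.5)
The plan is to apply \Cref{lem:ordered-convex-comparison} with the functions $f_i(s):=\ell(t_i,s)$, respectively $f_i(s):=\ell(u_i,s)$, the base point $b:=a^2$, and the points $s_i$. The defects $d_i=n_i(s_i-a^2)$ and partial sums $D_j$ in that lemma are then exactly the $D_j$ of the present statement. Once the hypotheses of \Cref{lem:ordered-convex-comparison} are checked, its conclusion $\sum n_if_i(s_i)\geq\sum n_if_i(a^2)$ is literally \eqref{eq:null-comparison-future}, respectively \eqref{eq:null-comparison-past}.

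First, I will fix the domains. For part (a) take $I_i:=(-\infty,t_i^2]$, which contains $s_i$ by hypothesis and contains $a^2$ since $t_i\geq a$. On this interval $s\mapsto t_i-\sqrt{t_i^2-s}$ is convex, because $-\sqrt{\cdot}$ is convex and is composed with an affine map.

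Differentiability at $b=a^2$ is the delicate point. It holds when $t_i>a$, with derivative $q_a(t_i)$ from \eqref{eq:null-baseline-slope}. When $t_i=a$, however, $b$ is the endpoint of $I_i$ and the slope is infinite, so this degenerate case must be handled separately. I will do this by perturbation: replace $a$ by a slightly smaller $a'<a$. Then $t_i>a'$, the new defects $s_i-a'^2$ have partial sums $D_j'=D_j+(a^2-a'^2)\sum_{i\leq j}n_i$, and these need not keep the sign $D_j'\leq0$.

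So perturbation may fail, and I would instead argue directly. If $t_i=a$, then $s_i\leq a^2$, hence $\ell(a,s_i)\geq\ell(a,a^2)=a$ fails the other way. In fact $\ell(a,s_i)=a-\sqrt{a^2-s_i}\leq a$. Thus in part (a) the indices with $t_i=a$ are the last ones, and there $d_i\leq0$.

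The cleanest fix is to use the supporting-line inequality only with a finite slope. I would check whether one can assign to such indices any slope $q_i\geq q_{i-1}$ that is large enough; the subgradient of a convex function at the endpoint can be any value at least the left derivative. The inequality $f_i(s)\geq f_i(b)+q_i(s-b)$ for $s\leq b$ requires $q_i$ to be at least the left difference quotients, which blow up, so this approach also fails. Honestly, the main obstacle is this endpoint case. I expect the intended argument either implicitly has $t_i>a$, since in the application $e\cdot\wt U_G>\tau$ with the relevant normalization, or handles $t_i=a$ by a limiting argument: apply the result with $t_i$ replaced by $t_i+\delta$ and $s_i$ unchanged, then let $\delta\to0$ using continuity of $\ell$ in $t$. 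This last route works, because $D_j$ does not depend on $t$ and the ordering is preserved. I will therefore adopt it: prove the case $t_m>a$ and pass to the limit.

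Next, I will check the ordering. In case (a), $t_1\geq\cdots\geq t_m>a$ and $q_a$ is decreasing, so $q_1\leq\cdots\leq q_m$, and $q_m>0$. The sign hypotheses on $D_j$ are exactly case (a) of \Cref{lem:ordered-convex-comparison}. In case (b), $u_1\leq\cdots\leq u_m$ gives $q_1\geq\cdots\geq q_m>0$, and $D_j\geq0$ for all $j$ is case (b); the limiting device is applied identically when $u_1=a$. Both conclusions then follow.
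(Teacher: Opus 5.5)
Your adopted argument is correct and is essentially the paper's proof. Shift all $t_i$ (respectively $u_i$) by $\delta>0$ with the $s_i$ fixed; this preserves the ordering, the constraints $s_i\leq(t_i+\delta)^2$ and the partial sums $D_j$, and it makes $a^2$ an interior point where $\ell(t_i+\delta,\cdot)$ is differentiable, with slopes $q_a(t_i+\delta)$ ordered exactly as \Cref{lem:ordered-convex-comparison}(a) or (b) requires; continuity of $\ell$ in $t$ then lets you pass to $\delta\to0$. The abandoned detours (perturbing $a$, endpoint subgradients) should simply be dropped from a final write-up.
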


\begin{proof}
Fix $\varepsilon>0$. In part (a), apply
\Cref{lem:ordered-convex-comparison}(a) with $b=a^2$ and
\[
  f_i(s):=\ell(t_i+\varepsilon,s).
\]
These functions are convex on intervals containing both $a^2$ and $s_i$.
Their supporting slopes at $a^2$ are positive and, by
\eqref{eq:null-baseline-slope}, form a non-decreasing sequence because the
$t_i$ form a non-increasing sequence. Hence
\[
  \sum_i n_i\ell(t_i+\varepsilon,s_i)
  \geq
  \sum_i n_i\ell(t_i+\varepsilon,a^2).
\]
Letting $\varepsilon\to 0$ proves
\eqref{eq:null-comparison-future}. The regularization also covers the case
$t_i=a$, where the graph has a vertical tangent at $s=a^2$.

For part (b), use instead $f_i(s)=\ell(u_i+\varepsilon,s)$. Since the $u_i$
are non-decreasing, their supporting slopes form a non-increasing positive
sequence. Part (b) of \Cref{lem:ordered-convex-comparison}, followed by
$\varepsilon\to 0$, gives \eqref{eq:null-comparison-past}.
\end{proof}

\subsection{The ordered Lorentzian partial-sum estimate}
\label{subsec:ordered-partial-sum}

The two sign patterns in the theorem below are exactly those that will
arise from the Harder--Narasimhan filtrations in
\Cref{sec:unshifted}. Indeed, if
\[
  0=E_0\subset E_1\subset\cdots\subset E_m=E
\]
is the filtration of a minimal-rank destabilizer, with factors
$G_i=E_i/E_{i-1}$ and ranks $n_i$, then
\eqref{eq:augmented-square} gives
\[
  \sum_{i=1}^j n_i\bigl(\wt U_{G_i}^2-a^2\bigr)
  =2\Lambda(E_j).
\]
Rank minimality will give non-positive proper partial sums and a
non-negative total sum. For the Harder--Narasimhan filtration of the kernel
sheaf $F$ appearing in \eqref{eq:structure-sheaf-sequence}, it will instead
give non-negative partial sums at every stage.
The slope ordering and \eqref{eq:time-threshold} yield the corresponding
ordering and one-sided bounds for the time coordinates.

The preceding null-coordinate comparison converts these partial-sum
conditions into a radial estimate. Combining this with the ordinary triangle
inequality in $e^\perp$ gives the Lorentzian bound below.

\begin{theorem}[Ordered Lorentzian partial-sum estimate]
\label{thm:ordered-lorentzian-partial-sum}
Let $V$ have signature $(1,N)$, let $e^2=1$, and fix $a>0$. Let
$m\geq1$, let $x_1,\dots,x_m\in V$, let $n_1,\dots,n_m>0$, and put
\[
  \begin{aligned}
    t_i&:=e\cdot x_i,
    & s_i&:=x_i^2,
    & D_0&:=0,\\
    D_j&:=\sum_{i=1}^j n_i(s_i-a^2),
    & X&:=\sum_{i=1}^m n_i x_i.
  \end{aligned}
\]

\begin{enumerate}[label=\textup{(\roman*)}]
  \item \emph{Future case.}
  Assume
  \[
    t_1\geq\cdots\geq t_m>-a,
    \qquad D_j\leq0\ (j<m),
    \qquad D_m\geq0.
  \]
  Then necessarily $t_i\geq a$ for every $i$. Moreover, $X$ is
  future-pointing and timelike, and
  \begin{align}
    X^2
    &\geq
    \left(\sum_i n_i\bigl(t_i-\sqrt{t_i^2-a^2}\bigr)\right)
    \left(\sum_i n_i\bigl(t_i+\sqrt{t_i^2-a^2}\bigr)\right)
    \label{eq:future-refined-partial-sum-bound}\\
    &\geq a^2\left(\sum_i n_i\right)^2.
    \label{eq:future-partial-sum-bound}
  \end{align}

  \item \emph{Past case.}
  Assume
  \[
    t_1\geq\cdots\geq t_m,
    \qquad t_1<a,
    \qquad D_j\geq0\quad(1\leq j\leq m).
  \]
  Then necessarily $t_i\leq-a$ for every $i$. Moreover, writing
  $u_i:=-t_i$, the vector $-X$ is future-pointing and timelike, and
  \begin{align}
    X^2
    &\geq
    \left(\sum_i n_i\bigl(u_i-\sqrt{u_i^2-a^2}\bigr)\right)
    \left(\sum_i n_i\bigl(u_i+\sqrt{u_i^2-a^2}\bigr)\right)
    \label{eq:past-refined-partial-sum-bound}\\
    &\geq a^2\left(\sum_i n_i\right)^2.
    \label{eq:past-partial-sum-bound}
  \end{align}
\end{enumerate}
\end{theorem}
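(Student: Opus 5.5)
The plan is to decompose each $x_i$ along $e$, first derive the sign information on the time coordinates, and then combine \Cref{lem:null-coordinate-comparison} with the triangle inequality in the negative definite space $e^\perp$.

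\textbf{Time coordinates.} Write $x_i=t_ie+\xi_i$ with $\xi_i\in e^\perp$. Then $s_i=t_i^2-\norm{\xi_i}^2\leq t_i^2$ always, so the hypothesis $s_i\leq t_i^2$ of \Cref{lem:null-coordinate-comparison} is automatic.

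In the future case, $D_m\geq0$ together with $D_{m-1}\leq0$ (or $D_0=0$ when $m=1$) gives $n_m(s_m-a^2)\geq0$. Hence $t_m^2\geq s_m\geq a^2$, so $\abs{t_m}\geq a$. Combined with $t_m>-a$ this forces $t_m\geq a$, and the ordering then gives $t_i\geq a$ for all $i$.

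In the past case, $D_1\geq0$ gives $s_1\geq a^2$, so $\abs{t_1}\geq a$. Together with $t_1<a$ this yields $t_1\leq-a$, and the ordering then gives $t_i\leq-a$ for all $i$.

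\textbf{Future case bound.} Put $T:=\sum_in_it_i$ and $\Xi:=\sum_in_i\xi_i$, so that $X=Te+\Xi$. The form is negative definite on $e^\perp$, so $\norm{\cdot}$ is a genuine Euclidean norm there, and the triangle inequality gives $\norm{\Xi}\leq\sum_in_i\norm{\xi_i}$. Since $\norm{\xi_i}=\sqrt{t_i^2-s_i}$, we have $t_i-\norm{\xi_i}=\ell(t_i,s_i)$. Therefore
\[
  A:=T-\norm{\Xi}\geq\sum_in_i\ell(t_i,s_i)\geq\sum_in_i\ell(t_i,a^2)=:A_0,
\]
where the last inequality is \Cref{lem:null-coordinate-comparison}(a).

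Each $\ell(t_i,a^2)=t_i-\sqrt{t_i^2-a^2}$ is strictly positive because $t_i\geq a>0$. Hence $A_0>0$, and so $T\geq A>0$. Now
\[
  X^2=T^2-\norm{\Xi}^2=A(2T-A).
\]
The map $A\mapsto A(2T-A)$ is increasing on $(-\infty,T]$, and $A_0\leq A\leq T$. Therefore
\[
  X^2\geq A_0(2T-A_0)=A_0\sum_in_i\bigl(t_i+\sqrt{t_i^2-a^2}\bigr),
\]
which is \eqref{eq:future-refined-partial-sum-bound}. Since this is positive, $X$ is timelike, and $T>0$ shows that $X$ is future-pointing.

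For \eqref{eq:future-partial-sum-bound}, put $\alpha_i:=t_i-\sqrt{t_i^2-a^2}$ and $\beta_i:=t_i+\sqrt{t_i^2-a^2}$. Both are positive and $\alpha_i\beta_i=a^2$. Cauchy--Schwarz gives
\[
  \Bigl(\sum n_i\alpha_i\Bigr)\Bigl(\sum n_i\beta_i\Bigr)\geq\Bigl(\sum n_i\sqrt{\alpha_i\beta_i}\Bigr)^2=a^2\Bigl(\sum n_i\Bigr)^2.
\]

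\textbf{Past case.} Replace $x_i$ by $-x_i$, which has time coordinate $u_i\geq a$ and the same square $s_i$. The $u_i$ are non-decreasing, so the order of the inputs is reversed. The same triangle-inequality argument applies, with \Cref{lem:null-coordinate-comparison}(b) used in place of (a). It gives the bounds for $(-X)^2=X^2$ and shows that $-X$ is future-pointing and timelike.

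\textbf{Main obstacle.} The only non-formal step is the passage from the lower bound on $A=T-\norm{\Xi}$ to the product bound. It needs the monotonicity of $A\mapsto A(2T-A)$ on $(-\infty,T]$, together with the positivity $A_0>0$. That positivity in turn depends on having first established $t_i\geq a$ for every $i$.
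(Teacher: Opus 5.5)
Your proof is correct and follows essentially the same route as the paper. You derive $t_i\geq a$ (resp.\ $t_i\leq -a$) from the last (resp.\ first) defect, bound the weighted lower null coordinates via \Cref{lem:null-coordinate-comparison}, and combine this with the triangle inequality in $e^\perp$, the monotonicity of $v\mapsto v(2T-v)$, and Cauchy--Schwarz. The only cosmetic difference is that you apply the monotonicity to $A=T-\norm{\Xi}$, so that $X^2=A(2T-A)$ holds with equality, whereas the paper first passes to the lower bound $T^2-R^2$ with $R=\sum_i n_i\norm{\xi_i}$.
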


\begin{proof}
For the future version, the last defect is non-negative because
\[
  n_m(s_m-a^2)=D_m-D_{m-1}\geq0.
\]
Thus $s_m\geq a^2$. Since $s_m=t_m^2-\norm{\xi_m}^2$ and $t_m>-a$, this
forces $t_m\geq a$, and hence $t_i\geq a$ for all $i$.

Write $x_i=t_i e+\xi_i$ and set
\[
  \begin{aligned}
    r_i&:=\norm{\xi_i}=\sqrt{t_i^2-s_i},
    & T&:=\sum_i n_i t_i,\\
    R&:=\sum_i n_i r_i,
    & U&:=T-R.
  \end{aligned}
\]
By \Cref{lem:null-coordinate-comparison}(a),
\begin{equation*}
  U=\sum_i n_i\ell(t_i,s_i)
  \geq
  U_0:=\sum_i n_i\ell(t_i,a^2)>0.
\end{equation*}
Also $U\leq T$. The ordinary triangle inequality in the negative-definite
space $e^\perp$ gives
\[
  \norm{\sum_i n_i\xi_i}\leq R.
\]
Therefore
\begin{align*}
  X^2
  &=T^2-\norm{\sum_i n_i\xi_i}^2\\
  &\geq T^2-R^2
   =U(2T-U)\\
  &\geq U_0(2T-U_0),
\end{align*}
where the last step uses that $v\mapsto v(2T-v)$ is increasing on
$[0,T]$. Since
\[
  U_0=\sum_i n_i\bigl(t_i-\sqrt{t_i^2-a^2}\bigr)
\]
and
\[
  2T-U_0
  =\sum_i n_i\bigl(t_i+\sqrt{t_i^2-a^2}\bigr),
\]
this proves \eqref{eq:future-refined-partial-sum-bound}. Each pair of factors in
the two sums has product $a^2$, so Cauchy--Schwarz yields
\[
  U_0(2T-U_0)
  \geq
  \left(\sum_i n_i a\right)^2,
\]
which is \eqref{eq:future-partial-sum-bound}. Finally, $T>0$, so $X$ is
future-pointing.

For the past version, the first partial-sum condition gives $s_1\geq a^2$.
Since $t_1<a$, it follows that $t_1\leq-a$, and the ordering gives
$t_i\leq-a$ for all $i$. Put $y_i:=-x_i$ and $u_i=e\cdot y_i=-t_i$, then
\[
  a\leq u_1\leq\cdots\leq u_m.
\]
Apply \Cref{lem:null-coordinate-comparison}(b) and repeat the radial argument
above for the vectors $y_i$. This gives
\eqref{eq:past-refined-partial-sum-bound}. The same Cauchy--Schwarz calculation
gives \eqref{eq:past-partial-sum-bound}. Since
$\sum_i n_i y_i=-X$ has positive time coordinate, it is future-pointing.
\end{proof}

\subsection{The reverse triangle inequality}
\label{subsec:reverse-triangle}

The partial-sum theorem will be applied in \Cref{sec:unshifted} to the
Lorentzian lifts associated with two Harder--Narasimhan blocks. It yields
future-pointing timelike vectors $\wt X_E$ and
$-\wt X_F$. Their projections to $\NS(X)_\R$, denoted there by $X_E$ and
$-X_F$, satisfy
\[
  \Gamma-C=X_E+(-X_F).
\]
To combine their individual lower bounds into a bound for $\Gamma-C$, we use the standard reverse triangle inequality \cite[Chapter~5, Corollary~31]{ONeill}, recalled here for convenience.

\begin{lemma}
If $u$ and $v$ are future-pointing timelike vectors in a Lorentzian space of
signature $(1,N)$, then
\[
  \sqrt{(u+v)^2}\geq\sqrt{u^2}+\sqrt{v^2}.
\]
\end{lemma}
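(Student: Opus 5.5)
We prove the reverse triangle inequality directly, by decomposing both vectors relative to the fixed unit timelike vector $e$. Write $u=t e+\xi$ and $v=t'e+\eta$ with $t=e\cdot u>0$, $t'=e\cdot v>0$, and $\xi,\eta\in e^\perp$. The form restricted to $e^\perp$ is negative definite, so $\norm{\cdot}$ is a genuine Euclidean norm there. Timelikeness gives $t>\norm{\xi}$ and $t'>\norm{\eta}$.

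The key step is the reverse Cauchy--Schwarz inequality
\[
  u\cdot v\geq\sqrt{u^2}\sqrt{v^2}.
\]
First, the ordinary Cauchy--Schwarz inequality in $e^\perp$ gives
\[
  u\cdot v=tt'-(-\xi\cdot\eta)\geq tt'-\norm{\xi}\,\norm{\eta}.
\]
Second, we need the scalar inequality
\[
  tt'-\norm{\xi}\norm{\eta}\geq\sqrt{t^2-\norm{\xi}^2}\,\sqrt{t'^2-\norm{\eta}^2}.
\]
Both sides are positive. Squaring and rearranging reduces this to $(t\norm{\eta}-t'\norm{\xi})^2\geq0$, which holds trivially.

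With the reverse Cauchy--Schwarz inequality in hand, expand the square of the sum:
\[
  (u+v)^2=u^2+2u\cdot v+v^2\geq\bigl(\sqrt{u^2}+\sqrt{v^2}\bigr)^2.
\]
This shows in particular that $(u+v)^2>0$, so $u+v$ is timelike. Its time coordinate is $t+t'>0$, so it is also future-pointing. Taking square roots gives the claim.

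The only step that needs real care is the sign bookkeeping in the reverse Cauchy--Schwarz inequality. The first bound uses that $-\xi\cdot\eta\leq\norm{\xi}\norm{\eta}$ in the negative-definite space $e^\perp$. The second bound requires that squaring is legitimate, which holds because $tt'-\norm{\xi}\norm{\eta}>0$. Alternatively, one may simply cite \cite[Chapter~5, Corollary~31]{ONeill}.
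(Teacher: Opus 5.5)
Your proof is correct. Its final step, expanding $(u+v)^2$ once the reverse Cauchy--Schwarz inequality $u\cdot v\geq\sqrt{u^2}\sqrt{v^2}$ is known, is the same as the paper's. The difference lies in how that key inequality is obtained.

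The paper applies an orthochronous Lorentz transformation that puts $u$ in its rest frame, $u=(\sqrt{u^2},0,\dots,0)$. It then reads off $u\cdot v=\sqrt{u^2}\,v_0\geq\sqrt{u^2}\sqrt{v^2}$ from $v_0>0$ and $v_0^2\geq v^2$. You instead stay in the fixed frame determined by $e$ and bound the spatial contribution with the Euclidean Cauchy--Schwarz inequality in $e^\perp$. This reduces the claim to the two-dimensional inequality $tt'-\norm{\xi}\norm{\eta}\geq\sqrt{t^2-\norm{\xi}^2}\,\sqrt{t'^2-\norm{\eta}^2}$, which is the perfect square $(t\norm{\eta}-t'\norm{\xi})^2\geq0$.

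The paper's version is shorter and more conceptual. However, it tacitly relies on two facts: an orthochronous transformation sending $u/\sqrt{u^2}$ to $e$ exists, and such transformations preserve the future time cone. The second fact is what guarantees $v_0>0$ after the change of frame. Your argument needs neither input, since positivity of $u\cdot v$ follows from $t,t'>0$ alone. Your reduction to the time coordinate and the norm on $e^\perp$ is also the same radial device the paper uses in the proof of \Cref{thm:ordered-lorentzian-partial-sum}. As a small bonus, you record that $u+v$ is itself future-pointing and timelike.
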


\begin{proof}
After an orthochronous Lorentz transformation, write
$u=(\sqrt{u^2},0,\dots,0)$. Since $v$ is future-pointing,
\[
  u\cdot v=\sqrt{u^2}\,v_0
  \geq\sqrt{u^2}\sqrt{v^2}.
\]
Therefore
\[
  (u+v)^2=u^2+v^2+2u\cdot v
  \geq\left(\sqrt{u^2}+\sqrt{v^2}\right)^2.
\]
\end{proof}

\section{The unshifted chamber}\label{sec:unshifted}
We now complete the proof of \Cref{thm:main}(a). The key step is
\Cref{prop:minimal-rank-one}, which shows that a weak or strict destabilizer
of minimal positive rank has rank one. We then extract from it a natural
divisorial subobject $\cO_X(-C)\subset\cO_X$ and prove that the resulting
non-zero effective divisor satisfies $C^2<0$.
\begin{proposition}[Minimal destabilizers have rank one]
\label{prop:minimal-rank-one}
Assume $B\cdot\omega<0$. Among the non-zero proper subobjects
$E\subset\cO_X$ in $\cA_{B,\omega}$ satisfying
\[
  \Arg Z_{B,\omega}(E)\geq\Arg Z_{B,\omega}(\cO_X),
\]
every subobject of minimal positive rank has rank one. The same conclusion
holds with the strict phase inequality.
\end{proposition}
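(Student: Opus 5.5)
Let $E\subset\cO_X$ be a weak (respectively strict) destabilizer of minimal positive rank $r$, and suppose for contradiction that $r\geq2$. By \Cref{lem:subobject-structure} we have the sequence \eqref{eq:structure-sheaf-sequence}
\[
  0\to F\to E\to I_Z(-C)\to0,
\]
with $E\in\cT_{B,\omega}$ torsion-free of rank $r$ and $F\in\cF_{B,\omega}$ torsion-free of rank $r-1\geq1$. Using \eqref{eq:phase-lambda-equivalence}, the destabilizing condition reads $\Lambda(E)\geq0$ (resp.\ $>0$). The plan is to show that then $\Lambda(\cO_X(-C))>0$ and $\omega\cdot C<p$. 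By \Cref{lem:divisorial-subobject-criterion} this makes $\cO_X(-C)$ a strictly destabilizing rank-one subobject, contradicting the minimality of $r\geq2$.

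\textbf{Sign conditions.} Take the HN filtration $0=E_0\subset\dots\subset E_m=E$ with factors $G_i$ of ranks $n_i$. Each $E_j$ with $j<m$ lies in $\cT_{B,\omega}$, because all its HN slopes are $\geq\mu^-(E)>B\cdot\omega$. The quotient $E/E_j$ also lies in $\cT_{B,\omega}$, since the class is quotient-closed. Hence by \Cref{lem:tilt-exactness} the composite $E_j\to E\to\cO_X$ is a monomorphism in $\cA_{B,\omega}$, being a composite of monomorphisms. Its rank is $<r$, so minimality forces $\Lambda(E_j)<0$ (resp.\ $\leq0$); in either case $\Lambda(E_j)\leq0$. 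With $x_i=\wt U_{G_i}$ and \eqref{eq:augmented-square}, this gives $D_j=2\Lambda(E_j)\leq0$ for $j<m$ and $D_m\geq0$. The time coordinates are decreasing and exceed $\tau>-a$ by \eqref{eq:time-threshold}. \Cref{thm:ordered-lorentzian-partial-sum}(i) then shows that $\wt X_E:=\sum n_i\wt U_{G_i}$ is future timelike with $\wt X_E^2\geq a^2r^2$.

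For $F$, take its HN filtration $F_1\subset\dots\subset F_k=F$. Each $F_j\in\cF_{B,\omega}$, so $E/F_j$ is an extension of $I_Z(-C)$ by $F/F_j$. I expect the main obstacle here: the subobject one wants is not $E/F_j$ but rather the image-type subobject. The cleanest candidate is the cone $E'_j$ of $F_j\to E$, which sits in the heart with $H^{-1}=0$ and $H^0=E/F_j$. This object needs $E/F_j\in\cT_{B,\omega}$, which need not hold. Instead I would use the subobject $E_j':=\ker(\cO_X\to Q/F_j[1])$ in the heart; here $F_j[1]\subset F[1]=H^{-1}(Q)[1]\subset Q$ is a subobject, because $F/F_j\in\cF_{B,\omega}$. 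This $E'_j$ has class $\ch(E)-\ch(F_j)$ and rank $r-\rk F_j<r$ when $j\geq1$. Minimality then gives $\Lambda(E)-\Lambda(F_j)\leq0$, i.e.\ $\Lambda(F_j)\geq\Lambda(E)\geq0$. So all partial sums for the lifts of $F$'s factors are $\geq0$, and the time coordinates are $\leq\tau<a$. \Cref{thm:ordered-lorentzian-partial-sum}(ii) then gives that $-\wt X_F$ is future timelike with $\wt X_F^2\geq a^2(r-1)^2$.

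\textbf{Combining.} Compute $\sum_E n_iU_{G_i}=r\Gamma+c_1(E)$ and $\sum_F=(r-1)\Gamma+c_1(F)$; since $c_1(E)-c_1(F)=-C$, the projections satisfy $X_E+(-X_F)=\Gamma-C$. By \eqref{eq:projection-increases-square}, the projections remain future timelike with the same squared bounds. The reverse triangle inequality then gives $(\Gamma-C)^2\geq(ar+a(r-1))^2>a^2=\Gamma^2$ when $r\geq2$. By \eqref{eq:Lambda-line-divisor} this yields $\Lambda(\cO_X(-C))>0$.

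\textbf{Remaining subobject condition.} The time coordinate of $\Gamma-C$ is positive, which gives $e\cdot(\Gamma-C)>0$. I still need the stronger condition $\omega\cdot C<p$, which should follow because $I_Z(-C)\in\cT_{B,\omega}$ has slope $-\omega\cdot C>B\cdot\omega=-p$. Thus $\cO_X(-C)\subset\cO_X$ by \Cref{lem:divisorial-subobject-criterion}, it has rank one and is strictly destabilizing, and this is the desired contradiction.
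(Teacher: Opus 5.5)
Your proposal is correct and follows essentially the paper's route. Rank minimality gives the sign conditions on the Harder--Narasimhan partial sums of $E$ and $F$, and the future and past versions of \Cref{thm:ordered-lorentzian-partial-sum} bound $X_E$ and $-X_F$. The reverse triangle inequality applied to $\Gamma-C=X_E+(-X_F)$ then yields the strictly destabilizing rank-one subobject $\cO_X(-C)$.

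One correction: your worry that $E/F_j$ might fail to lie in $\cT_{B,\omega}$ is unfounded, since it is a quotient of $E\in\cT_{B,\omega}$. The paper uses exactly $E/F_j\to\cO_X$ via \Cref{lem:tilt-exactness}, and your $\ker(\cO_X\to Q/F_j[1])$ is isomorphic to it. If that kernel is all of $\cO_X$, you should use $\Lambda(\cO_X)=0$ in place of minimality.
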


We first record the categorical consequence of rank minimality used in the
proof.

\begin{lemma}
\label{lem:minimal-hn-sign-conditions}
Assume $B\cdot\omega<0$, and let
\[
  0\longrightarrow E\longrightarrow\cO_X\longrightarrow Q\longrightarrow0
\]
be exact in $\cA_{B,\omega}$, with $E$ non-zero and proper. Put
$r:=\rk(E)$ and $F:=H^{-1}(Q)$, and assume $r\geq2$. Let
\[
  0=E_0\subset E_1\subset\cdots\subset E_m=E,
  \qquad
  0=F_0\subset F_1\subset\cdots\subset F_n=F
\]
be the saturated $\mu_\omega$-Harder--Narasimhan filtrations. Suppose that
one of the following two minimality conditions holds:
\begin{itemize}
  \item[\textup{(W)}] $\Lambda(E)\geq0$, while every non-zero proper
  subobject $A\subset\cO_X$ with $0<\rk(A)<r$ satisfies $\Lambda(A)<0$.
  \item[\textup{(S)}] $\Lambda(E)>0$, while every non-zero proper
  subobject $A\subset\cO_X$ with $0<\rk(A)<r$ satisfies $\Lambda(A)\leq0$.
\end{itemize}
Then, in case \textup{(W)},
\[
  \Lambda(E_i)<0\quad(1\leq i<m),
  \qquad
  \Lambda(F_j)\geq0\quad(1\leq j\leq n),
\]
whereas in case \textup{(S)},
\[
  \Lambda(E_i)\leq0\quad(1\leq i<m),
  \qquad
  \Lambda(F_j)>0\quad(1\leq j\leq n).
\]
\end{lemma}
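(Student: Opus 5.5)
The plan is to realize each filtration step as a subobject of $\cO_X$ in $\cA_{B,\omega}$ of rank strictly less than $r$, and then read off the sign conditions from minimality. The $E_i$ give such subobjects directly. For the $F_j$, we use the additivity of $\Lambda$ to convert the sign of $\Lambda$ on an associated quotient into a sign of $\Lambda(F_j)$.

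\emph{Step 1: the steps $E_i$.} For $1\leq i<m$, the inclusion $E_i\subset E$ has torsion-free cokernel $E/E_i$, by saturation. Both $E_i$ and $E/E_i$ lie in $\cT_{B,\omega}$. For $E/E_i$ this holds because $\cT_{B,\omega}$ is quotient-closed. For $E_i$, its Harder--Narasimhan factors are the first $i$ factors of $E$, so $\mu^-_\omega(E_i)\geq\mu^-_\omega(E)>B\cdot\omega$. By \Cref{lem:tilt-exactness}, $E_i\subset E$ is a monomorphism in $\cA_{B,\omega}$. Composing with $E\hookrightarrow\cO_X$ exhibits $E_i$ as a non-zero subobject of $\cO_X$, proper because $\rk E_i<r$ and $\rk E_i>0$. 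Minimality then gives $\Lambda(E_i)<0$ in case (W) and $\Lambda(E_i)\leq0$ in case (S).

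\emph{Step 2: the steps $F_j$.} Since $F\in\cF_{B,\omega}$ and $F_j$ is saturated in $F$, the quotient $F/F_j$ is torsion-free. Its Harder--Narasimhan factors are the last factors of $F$, so $\mu^+_\omega(F/F_j)\leq\mu^+_\omega(F)\leq B\cdot\omega$. Hence $F/F_j\in\cF_{B,\omega}$ (or is zero when $j=n$), and $F_j\in\cF_{B,\omega}$ as well. Set $A_j:=E/F_j$ as a coherent sheaf.

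We first place $A_j$ in the heart. It is an extension of $I_Z(-C)$ by $F/F_j$, using \eqref{eq:structure-sheaf-sequence}. Since $F_j\in\cF_{B,\omega}$ and $E\in\cT_{B,\omega}$, the coherent surjection $E\to A_j$ gives $A_j\in\cT_{B,\omega}$. We next show that $E\to\cO_X$ factors through $A_j$. This holds because $F=\ker(E\to\cO_X)$ in $\Coh(X)$, by \eqref{eq:cohomology-subobject}, and $F_j\subset F$.

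The induced map $A_j\to\cO_X$ has coherent kernel $F/F_j\in\cF_{B,\omega}$ and cokernel $H^0(Q)$, which is torsion and so lies in $\cT_{B,\omega}$. By \Cref{lem:tilt-exactness}, $A_j\hookrightarrow\cO_X$ is a monomorphism in $\cA_{B,\omega}$, and it is non-zero because its image is $I_Z(-C)\neq0$. Its rank is $r-\rk F_j<r$, since $\rk F_j\geq1$ for $j\geq1$. It is also proper, since otherwise it would be an isomorphism, forcing $H^0(Q)=0$ and $F/F_j=0$ and giving rank one. Minimality therefore gives $\Lambda(A_j)<0$ in case (W) and $\Lambda(A_j)\leq0$ in case (S).

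\emph{Step 3: conclusion.} Additivity gives $\Lambda(F_j)=\Lambda(E)-\Lambda(A_j)$. In case (W) this is $\geq0-(\text{negative})>0$, which in particular gives $\geq0$. In case (S) it is $>0-(\text{non-positive})$, which gives $>0$.

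The main obstacle is Step 2: confirming that $E\to\cO_X$ factors through $E/F_j$ and that the resulting map is a monomorphism in the heart with the correct cokernel. This depends on identifying $F$ with the coherent kernel and on the torsion-pair membership of $F/F_j$, which is where the saturated Harder--Narasimhan filtration is needed.
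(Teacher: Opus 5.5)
Your route is the paper's: you realize $E_i$ and $E/F_j$ as subobjects of $\cO_X$ through the tilt-exactness lemma, apply minimality, and use additivity $\Lambda(F_j)=\Lambda(E)-\Lambda(E/F_j)$. The gap is in the properness claims. In Step~2, your argument that otherwise the map would be an isomorphism ``giving rank one'' gives a contradiction only for $j<n$, where $\rk(E/F_j)=r-\rk(F_j)\geq2$. For $j=n$ one always has $E/F\cong I_Z(-C)$ of rank one. Then $E/F\to\cO_X$ is the identity subobject exactly when $E\to\cO_X$ is surjective as a map of sheaves, equivalently $C=0$ and $Z=\emptyset$.

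Nothing in the lemma's hypotheses evidently excludes this. In the paper it is ruled out only afterwards, by the Lorentzian argument that this lemma feeds into. So minimality cannot be applied at $j=n$, and your conclusion $\Lambda(F_n)>0$ in case (W) is unsupported. In that configuration $\Lambda(F)=\Lambda(E)$, which may be $0$ under (W); this is why the lemma only asserts $\Lambda(F_j)\geq0$ there. The repair is the paper's. If $E/F_j\to\cO_X$ is the identity subobject, then $\Lambda(E/F_j)=\Lambda(\cO_X)=0$. So in every case $\Lambda(E/F_j)\leq0$ and $\Lambda(F_j)\geq\Lambda(E)$, which gives $\geq0$ under (W) and $>0$ under (S).

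Step~1 has a smaller slip of the same kind. The bounds $0<\rk(E_i)<r$ do not make $E_i$ a proper subobject of $\cO_X$ when $\rk(E_i)=1$. Properness is genuinely needed here, because in case (W) the strict inequality $\Lambda(E_i)<0$ would fail for $\cO_X$ itself. Argue instead as follows. If the composite $E_i\to E\to\cO_X$ were an isomorphism, then $E\to\cO_X$ would be a monomorphism with a right inverse, hence an isomorphism. That contradicts the properness of $E$, or equally $\rk(E)\geq2$.
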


\begin{proof}
By \Cref{lem:subobject-structure}, $E$ is a torsion-free sheaf in
$\cT_{B,\omega}$, while $F$ is a torsion-free sheaf in
$\cF_{B,\omega}$ of rank $r-1>0$.

For $1\leq i<m$, both $E_i$ and $E/E_i$ belong to
$\cT_{B,\omega}$. Thus
\[
  0\longrightarrow E_i\longrightarrow E\longrightarrow E/E_i
  \longrightarrow0
\]
is exact in the tilted heart by \Cref{lem:tilt-exactness}, and composition
with $E\hookrightarrow\cO_X$ makes $E_i$ a non-zero subobject of $\cO_X$.
It is proper: if $E_i\to\cO_X$ were an isomorphism, then
$E\to\cO_X$ would have a right inverse and hence would itself be an
isomorphism in $\cA_{B,\omega}$. Since $0<\rk(E_i)<r$, minimality gives
the asserted inequalities for $\Lambda(E_i)$.

For $1\leq j\leq n$, the inclusion
$F_j\subset F=\ker_{\Coh}(E\to\cO_X)$ gives a factorization
\[
  \bar f_j:E/F_j\longrightarrow\cO_X.
\]
The source $E/F_j$ lies in $\cT_{B,\omega}$ because it is a quotient of
$E$. The coherent kernel of $\bar f_j$ is $F/F_j\in\cF_{B,\omega}$:
its Harder--Narasimhan factors form a tail of those of $F$ and therefore
have slope at most $B\cdot\omega$. The coherent cokernel is $H^0(Q)$, a
torsion sheaf in $\cT_{B,\omega}$. Hence \Cref{lem:tilt-exactness} shows
that $\bar f_j$ is a monomorphism in $\cA_{B,\omega}$.

This subobject has positive rank smaller than $r$. If it is proper, either
minimality condition gives $\Lambda(E/F_j)\leq0$. If it is the identity
subobject of $\cO_X$, the same conclusion follows from
$\Lambda(\cO_X)=0$. Additivity in
\[
  0\longrightarrow F_j\longrightarrow E\longrightarrow E/F_j
  \longrightarrow0
\]
therefore gives
\[
  \Lambda(F_j)
  =\Lambda(E)-\Lambda(E/F_j)
  \geq\Lambda(E)\geq0.
\]
In case \textup{(S)}, the right-hand side is strictly positive. This proves
the remaining assertions.
\end{proof}

\begin{proof}[Proof of \Cref{prop:minimal-rank-one}]
We prove the weak and strict versions simultaneously. Fix one of the two
cases. If no corresponding subobject exists, the assertion is vacuous. Otherwise,
\Cref{lem:subobject-structure}(i) shows that every non-zero subobject of
$\cO_X$ is a torsion-free sheaf of positive rank. We may therefore choose one
of minimal rank and denote it by $E\subset\cO_X$. In the weak case it satisfies
\[
  \Arg Z_{B,\omega}(E)\geq\Arg Z_{B,\omega}(\cO_X),
\]
whereas in the strict case it satisfies the strict inequality. By
\eqref{eq:phase-lambda-equivalence} and
\eqref{eq:strict-phase-lambda-equivalence}, these conditions are
respectively
\begin{equation}\label{eq:Lambda-E-nonnegative}
  \Lambda(E)\geq0,
  \qquad\text{or}\qquad
  \Lambda(E)>0.
\end{equation}
Set
\[
  r:=\rk(E).
\]
If $r=1$, there is nothing to prove. Assume for contradiction that
$r\geq2$.
By \Cref{lem:subobject-structure}, if $Q$ is the quotient of $\cO_X$ by $E$
in the tilted heart and $F:=H^{-1}(Q)$, then
\begin{equation}\label{eq:E-F-IZC}
  0\longrightarrow F\longrightarrow E
  \longrightarrow I_Z(-C)\longrightarrow0
\end{equation}
is exact in $\Coh(X)$ for an effective divisor $C$ and a zero-dimensional
subscheme $Z$, with
\begin{equation*}
  \rk(F)=r-1.
\end{equation*}
Let
\begin{equation*}
  0=E_0\subset E_1\subset\cdots\subset E_m=E
\end{equation*}
be the $\mu_\omega$-Harder--Narasimhan filtration of $E$, and put
\[
  G_i:=E_i/E_{i-1},
  \qquad r_i:=\rk(G_i).
\]
All $G_i$ have slopes strictly larger than $B\cdot\omega$, and their slopes
are strictly decreasing.

The choice of $E$ satisfies condition \textup{(W)} of
\Cref{lem:minimal-hn-sign-conditions} in the weak case and condition
\textup{(S)} in the strict case. Hence
\begin{equation}\label{eq:Lambda-E-filtration-nonpositive}
  \Lambda(E_i)\leq0
  \qquad (1\leq i<m),
\end{equation}
with strict inequality in the weak case.

Apply the future version of \Cref{thm:ordered-lorentzian-partial-sum} in the
augmented Lorentzian space $\wt V$ to the vectors $\wt U_{G_i}$ with weights
$r_i$. Their time coordinates are decreasing, and by
\eqref{eq:time-threshold} we have
$e\cdot\wt U_{G_m}>\tau>-a$. By
\eqref{eq:augmented-square},
\begin{align*}
  \sum_{i=1}^j r_i(\wt U_{G_i}^2-a^2)
  &=2\sum_{i=1}^j\Lambda(G_i)\\
  &=2\Lambda(E_j).
\end{align*}
Hence the proper partial sums are non-positive by
\eqref{eq:Lambda-E-filtration-nonpositive}, while the total sum is
non-negative by \eqref{eq:Lambda-E-nonnegative}. We conclude that
\[
  \wt X_E:=\sum_i r_i\wt U_{G_i}
\]
is future-pointing and
\[
  \wt X_E^2\geq r^2a^2.
\]
Its projection to $\NS(X)_\R$ is
\begin{equation*}
  X_E:=\sum_i r_iU_{G_i}=r\Gamma+c_1(E).
\end{equation*}
By \eqref{eq:projection-increases-square},
\begin{equation}\label{eq:XE-bound}
  X_E\text{ is future-pointing},
  \qquad X_E^2\geq r^2a^2.
\end{equation}
Since $r\geq2$, the sheaf $F$ has positive rank. Let
\begin{equation*}
  0=F_0\subset F_1\subset\cdots\subset F_n=F
\end{equation*}
be its $\mu_\omega$-Harder--Narasimhan filtration, with factors
\[
  H_j:=F_j/F_{j-1},
  \qquad s_j:=\rk(H_j).
\]
Their slopes are decreasing and are all at most $B\cdot\omega$.

The other half of \Cref{lem:minimal-hn-sign-conditions} gives the following
sign condition along the Harder--Narasimhan filtration steps of $F$:
\begin{equation}\label{eq:Lambda-F-filtration-nonnegative}
  \Lambda(F_j)\geq0
  \qquad (1\leq j\leq n),
\end{equation}
with strict inequalities in the strict case.

Apply the past version of \Cref{thm:ordered-lorentzian-partial-sum} to the
vectors $\wt U_{H_j}$ with weights $s_j$. Their time coordinates are decreasing, and by
\eqref{eq:time-threshold} we have
$e \cdot \wt U_{H_1}\leq\tau<a$. Moreover,
\[
  \sum_{j=1}^k s_j(\wt U_{H_j}^2-a^2)
  =2\Lambda(F_k)\geq0
\]
by \eqref{eq:Lambda-F-filtration-nonnegative}. Hence
\[
  \wt X_F:=\sum_j s_j\wt U_{H_j}
\]
is past-pointing and
\[
  \wt X_F^2\geq(r-1)^2a^2.
\]
Its projection is
\begin{equation*}
  X_F:=\sum_j s_jU_{H_j}=(r-1)\Gamma+c_1(F),
\end{equation*}
so
\begin{equation}\label{eq:XF-bound}
  -X_F\text{ is future-pointing},
  \qquad X_F^2\geq(r-1)^2a^2.
\end{equation}
Taking first Chern classes in \eqref{eq:E-F-IZC} gives
\begin{equation*}
  c_1(E)=c_1(F)-C.
\end{equation*}
Therefore,
\begin{equation*}
  \Gamma-C
  =X_E-X_F
  =X_E+(-X_F).
\end{equation*}
Both vectors on the right are future-pointing and timelike by
\eqref{eq:XE-bound} and \eqref{eq:XF-bound}. The reverse triangle
inequality gives
\begin{align*}
  \sqrt{(\Gamma-C)^2}
  &\geq\sqrt{X_E^2}+\sqrt{X_F^2}\\
  &\geq ra+(r-1)a=(2r-1)a.
\end{align*}
Thus
\begin{equation*}
  (\Gamma-C)^2\geq(2r-1)^2\Gamma^2.
\end{equation*}
Since $r\geq2$, \eqref{eq:Lambda-line-divisor} yields
\begin{equation}\label{eq:rank-one-strict-Lambda}
  \Lambda(\cO_X(-C))
  =\frac{(\Gamma-C)^2-\Gamma^2}{2}
  \geq2r(r-1)\Gamma^2>0.
\end{equation}
In particular, $C\neq0$.

We also have $I_Z(-C)\in\cT_{B,\omega}$ by
\Cref{lem:subobject-structure}. The coherent exact sequence
\[
  0\longrightarrow I_Z(-C)\longrightarrow\cO_X(-C)
  \longrightarrow\cO_Z(-C)\longrightarrow0
\]
shows that $\cO_X(-C)\in\cT_{B,\omega}$, because both $I_Z(-C)$ and the
zero-dimensional sheaf $\cO_Z(-C)$ lie in $\cT_{B,\omega}$. Therefore
\[
  0\longrightarrow\cO_X(-C)\longrightarrow\cO_X
  \longrightarrow\cO_C\longrightarrow0
\]
is exact in $\cA_{B,\omega}$. By \eqref{eq:rank-one-strict-Lambda} and
\eqref{eq:strict-phase-lambda-equivalence}, $\cO_X(-C)$ strictly
destabilizes $\cO_X$. It has rank one, contradicting the minimality of
$r\geq2$. This contradiction proves that $r=1$.
\end{proof}

\subsection{Extraction of the destabilizer}

Let $E$ be a minimal-rank destabilizer in either the weak or the strict
case. By \Cref{prop:minimal-rank-one}, $\rk(E)=1$. Then $F=0$ in
\eqref{eq:E-F-IZC}, and
\[
  E=I_Z(-C).
\]
If $C=0$, then \eqref{eq:Lambda-ideal-divisor} gives
\[
  \Lambda(E)=-\ell(Z).
\]
This is negative if $Z\neq\emptyset$. If $Z=\emptyset$, then
$E\cong\cO_X$. The inclusion $E\hookrightarrow\cO_X$ is then a non-zero
endomorphism of $\cO_X$. Since $X$ is connected and projective,
$H^0(X,\cO_X)=\C$, so this endomorphism is an isomorphism, contradicting
properness. Hence $C\neq0$.

By \eqref{eq:Lambda-ideal-divisor},
\begin{equation}\label{eq:rank-one-O-C-nonnegative}
  \Lambda(\cO_X(-C))
  =\Lambda(I_Z(-C))+\ell(Z)
  \geq0.
\end{equation}
In the strict case, the right-hand side is strictly positive.
As above, $I_Z(-C)\in\cT_{B,\omega}$ implies
$\cO_X(-C)\in\cT_{B,\omega}$, so $\cO_X(-C)$ is a subobject of $\cO_X$
in the tilted heart. It remains to prove $C^2<0$.

\begin{lemma}
\label{lem:negative-square}
Let $C\neq0$ be an effective divisor. If
\[
  \cO_X(-C)\in\cT_{B,\omega}
  \qquad\text{and}\qquad
  \Lambda(\cO_X(-C))\geq0,
\]
then $C^2<0$.
\end{lemma}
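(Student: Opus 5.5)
We argue by contradiction: assume $C^2\geq0$, and show that the two hypotheses together with the Hodge index theorem force $\Lambda(\cO_X(-C))<0$. The plan is to decompose every class orthogonally with respect to $\omega$ and reduce everything to an elementary inequality in a few real variables.

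\emph{Normalization.} Put $c:=\omega\cdot C$. Since $C\neq0$ is effective and $\omega$ is ample, $c>0$. The hypothesis $\cO_X(-C)\in\cT_{B,\omega}$ means $-c>B\cdot\omega=-p$, so
\[
  0<c<p.
\]
Write
\[
  B=-\tfrac ph\,\omega+\beta,
  \qquad
  C=\tfrac ch\,\omega+\gamma,
  \qquad
  \beta,\gamma\in\omega^\perp .
\]
By the Hodge index theorem the form is negative definite on $\omega^\perp$. Set $\norm{\beta}:=\sqrt{-\beta^2}$ and $\norm{\gamma}:=\sqrt{-\gamma^2}$, so that $\abs{\beta\cdot\gamma}\leq\norm\beta\norm\gamma$.

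With these coordinates,
\[
  C^2=\tfrac{c^2}{h}-\norm\gamma^2,
  \qquad
  B^2=\tfrac{p^2}{h}-\norm\beta^2,
  \qquad
  q=\frac{h-p^2/h+\norm\beta^2}{2p}.
\]
The assumption $C^2\geq0$ becomes the constraint $\norm\gamma\leq c/\sqrt h$.

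\emph{Expanding $\Lambda$.} By \eqref{eq:Lambda-line-divisor}, $\Lambda(\cO_X(-C))=C^2/2-\Gamma\cdot C$ with $\Gamma\cdot C=-B\cdot C+qc$. A direct substitution should give
\[
  2\Lambda(\cO_X(-C))
  =\frac{c(c-p)}{h}-\frac{ch}{p}-\norm\gamma^2-\frac cp\norm\beta^2+2\beta\cdot\gamma ,
\]
and hence the upper bound
\[
  2\Lambda(\cO_X(-C))\leq f(\norm\gamma),
  \qquad
  f(g):=\frac{c(c-p)}{h}-\frac{ch}{p}-g^2-\frac cp\norm\beta^2+2\norm\beta g .
\]
It then suffices to show $f(g)<0$ for all $g\in[0,c/\sqrt h]$. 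The function $f$ is concave in $g$, with unconstrained maximum at $g=\norm\beta$, so there are two cases.

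\emph{Case $\norm\beta\leq c/\sqrt h$.} The maximum is
\[
  f(\norm\beta)=\frac{c(c-p)}{h}-\frac{ch}{p}+\norm\beta^2\Bigl(1-\frac cp\Bigr).
\]
Since $c<p$ the last coefficient is positive, so we may use $\norm\beta^2\leq c^2/h$ to get
\[
  f(\norm\beta)\leq\frac{c(c-p)}{h}\Bigl(1-\frac cp\Bigr)-\frac{ch}{p}<0,
\]
because $c(c-p)<0$ and $1-c/p>0$.

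\emph{Case $\norm\beta>c/\sqrt h$.} Now $f$ is increasing on the allowed interval, so its maximum is at the endpoint $g=c/\sqrt h$. Completing the square should give
\[
  f\bigl(c/\sqrt h\bigr)
  =-\frac cp\Bigl(\bigl(\tfrac p{\sqrt h}-\norm\beta\bigr)^2+h\Bigr)<0 .
\]

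In both cases $\Lambda(\cO_X(-C))<0$, contradicting the hypothesis $\Lambda(\cO_X(-C))\geq0$. Hence $C^2<0$.

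The main obstacle is the bookkeeping in the expansion of $\Lambda$: in particular, correctly substituting $q$ in terms of $\norm\beta^2$, and fixing the sign of $\beta\cdot\gamma$ via negative definiteness of $\omega^\perp$. Once the expression for $2\Lambda$ is in hand, the rest is a one-variable concave maximization split into the two cases above. Both hypotheses are genuinely used: $c<p$ in the first case (to make $1-c/p>0$ and $c(c-p)<0$), and $C^2\geq0$ in the second case (to restrict $g\leq c/\sqrt h$).
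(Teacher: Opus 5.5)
Your proof is correct. I checked the expansion $2\Lambda(\cO_X(-C))=\frac{c(c-p)}{h}-\frac{ch}{p}-\norm\gamma^2-\frac cp\norm\beta^2+2\beta\cdot\gamma$. I also checked both cases, including the completed square $-\frac cp\bigl((\frac p{\sqrt h}-\norm\beta)^2+h\bigr)$ at $g=c/\sqrt h$.

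Your route differs from the paper's, although both rest on the same two ingredients: the Hodge index theorem on $\omega^\perp$, and Cauchy--Schwarz (the triangle inequality) there.

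\emph{The paper's route.} It argues directly in the Lorentzian normalization of \Cref{sec:phase-functional}. With $W:=\Gamma-C$, the hypothesis $\Lambda\geq0$ says $W^2\geq\Gamma^2=a^2$. Membership in $\cT_{B,\omega}$ gives $e\cdot W>\tau>-a$, which puts $W$ on the future sheet, and ampleness gives $e\cdot\Gamma>e\cdot W\geq a$. Since $\Gamma$ lies on the hyperboloid $x^2=a^2$ and $u\mapsto\sqrt{u^2-a^2}$ has slope strictly greater than $1$, the spatial separation $\norm{\gamma-w}$ exceeds the time separation $t_\Gamma-t_W$. Hence $C=\Gamma-W$ is spacelike. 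This argument is uniform: it has no case split and is not by contradiction. It reuses the data $(\Gamma,a,\tau,e)$ already built for \Cref{prop:minimal-rank-one}, and it makes the mechanism visible: $C$ is the difference of two points of the region $\{x^2\geq a^2,\ e\cdot x>0\}$, one on the boundary hyperboloid and strictly later in time.

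\emph{Your route.} You avoid $\Gamma$ and the Lorentzian language entirely. You work with the $\omega$-orthogonal components of $B$ and $C$ and reduce to a one-variable concave maximization. This is fully elementary and gives explicit negative upper bounds for $\Lambda(\cO_X(-C))$ whenever $C^2\geq0$. It also shows exactly where each hypothesis enters. For instance, when $\norm\beta\leq c/\sqrt h$ you get $\Lambda<0$ from $0<c<p$ alone, because the unconstrained maximum $f(\norm\beta)$ is already negative. The cost is heavier bookkeeping and a case distinction.
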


\begin{proof}
Put
\[
  W:=\Gamma-C.
\]
By \eqref{eq:Lambda-line-divisor},
\begin{equation}\label{eq:W-square}
  W^2\geq\Gamma^2=a^2.
\end{equation}
Since $\cO_X(-C)\in\cT_{B,\omega}$,
\[
  -C\cdot\omega>B\cdot\omega=-p,
\]
so $p-C\cdot\omega>0$. Therefore
\begin{equation*}
  e\cdot W
  =\frac{\Gamma\cdot\omega-C\cdot\omega}{\sqrt h}
  =\tau+\frac{p-C\cdot\omega}{\sqrt h}
  >\tau.
\end{equation*}
Because $W^2\geq a^2$ and $e\cdot W>\tau>-a$, the vector $W$ is
future-pointing: a past-pointing vector of square at least $a^2$ has time
coordinate at most $-a$. The vector $\Gamma$ is also future-pointing, since
$\Gamma^2=a^2$ and
\[
  e\cdot\Gamma=\tau+\frac p{\sqrt h}>\tau>-a.
\]

Because $C$ is a non-zero effective divisor and $\omega$ is ample,
\begin{equation*}
  t_\Gamma:=e\cdot\Gamma
  >e\cdot W=:t_W\geq a.
\end{equation*}
Write
\[
  \Gamma=t_\Gamma e+\gamma,
  \qquad W=t_W e+w,
  \qquad \gamma,w\in e^\perp.
\]
Then
\[
  \norm{\gamma}=\sqrt{t_\Gamma^2-a^2},
\]
while \eqref{eq:W-square} implies
\[
  \norm{w}
  =\sqrt{t_W^2-W^2}
  \leq\sqrt{t_W^2-a^2}.
\]
For $t>s\geq a$, the function $f(u)=\sqrt{u^2-a^2}$ satisfies
\begin{equation*}
  f(t)-f(s)
  =(t-s)\frac{t+s}{f(t)+f(s)}
  >t-s,
\end{equation*}
because $f(u)<u$ for $u\geq a$. Applying this with $t=t_\Gamma$ and
$s=t_W$, we obtain
\begin{align*}
  \norm{\gamma-w}
  &\geq\norm{\gamma}-\norm{w}\\
  &\geq f(t_\Gamma)-f(t_W)\\
  &>t_\Gamma-t_W.
\end{align*}
Finally,
\begin{align*}
  C^2
  &=(\Gamma-W)^2\\
  &=(t_\Gamma-t_W)^2-\norm{\gamma-w}^2\\
  &<0.
\end{align*}
\end{proof}

Applying \Cref{lem:negative-square} to
\eqref{eq:rank-one-O-C-nonnegative} proves that $C^2<0$. In the weak case
the resulting divisorial subobject has phase at least that of $\cO_X$. In
the strict case it has strictly larger phase. We have therefore concluded
both non-trivial implications in \Cref{thm:main}(a) for $L=\cO_X$. Tensor
reduction proves them for every line bundle.

\section{The shifted chamber}\label{sec:shifted}

By tensor reduction, it is again enough to treat $L=\cO_X$. Thus assume
\[
  \cO_X[1]\in\cA_{B,\omega},
  \qquad\text{equivalently}\qquad B\cdot\omega\geq0.
\]

We treat the boundary and positive chambers separately. On
$B\cdot\omega=0$, \Cref{lem:boundary-stability} shows directly that
$\cO_X[1]$ has no non-zero proper subobjects. For $B\cdot\omega>0$, derived
duality reduces the required weak and strict detection statements to the
unshifted result at the parameter $-B$, and transforms the short exact
sequence associated with the resulting effective divisor $C$ satisfying
$C^2<0$ into one whose subobject is
$\cO_X(C)|_C\subset\cO_X[1]$. Together these arguments prove the two
non-trivial implications in \Cref{thm:main}(b).

\subsection{The boundary wall \texorpdfstring{$B\cdot\omega=0$}{B dot omega = 0}}

\begin{lemma}
\label{lem:boundary-stability}
If $B\cdot\omega=0$, then $\cO_X[1]$ has no non-zero proper subobject in
$\cA_{B,\omega}$. In particular, it is $\sigma_{B,\omega}$-stable.
\end{lemma}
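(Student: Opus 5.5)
We argue that $\cO_X[1]$ is a \emph{minimal} object of $\cA_{B,\omega}$, i.e.\ it admits no non-zero proper subobject; stability is then vacuous. The key input is that when $B\cdot\omega=0$ the central charge of $\cO_X[1]$ is a negative real number, since \eqref{eq:central-charge-expanded} gives
\[
  Z_{B,\omega}(\cO_X[1])=-\tfrac12(\omega^2-B^2),
\]
with imaginary part $0$. By the Hodge index theorem $B^2\le (B\cdot\omega)^2/\omega^2=0$, so this value is strictly negative. Thus $\cO_X[1]$ has phase $\pi$.

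Now let $0\to E\to\cO_X[1]\to Q\to0$ be exact in $\cA_{B,\omega}$. The imaginary part $\im Z_{B,\omega}$ is additive and non-negative on the heart by \eqref{eq:stability-function-range}. Since $\im Z(\cO_X[1])=0$, we get $\im Z(E)=\im Z(Q)=0$. Hence both $E$ and $Q$ have phase $\pi$ or are zero.

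Next we use the long exact sequence of coherent cohomology:
\[
  0\to H^{-1}(E)\to\cO_X\to H^{-1}(Q)\to H^0(E)\to 0,
\]
together with $H^0(Q)=0$. The sheaf $H^{-1}(E)$ is a subsheaf of $\cO_X$ lying in $\cF_{B,\omega}$, so it is either $0$ or of the form $I_Z(-C)$ with slope $-C\cdot\omega\le 0$. We treat the two cases.

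\emph{Case $H^{-1}(E)\neq0$.} Then $\rk H^{-1}(Q)=0$. But $H^{-1}(Q)\in\cF_{B,\omega}$ is torsion-free, so $H^{-1}(Q)=0$, and then $H^0(E)=0$. Hence $E\cong H^{-1}(E)[1]$, and $H^{-1}(E)\to\cO_X$ is an isomorphism, so $Q=0$. The subobject is therefore not proper.

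\emph{Case $H^{-1}(E)=0$.} Then $E=H^0(E)\in\cT_{B,\omega}$ is a sheaf. Its imaginary part is
\[
  \im Z(E)=\omega\cdot c_1(E)-\rk(E)\,B\cdot\omega=\omega\cdot c_1(E)=0.
\]
A sheaf in $\cT_{B,\omega}$ whose torsion-free part is non-zero has $\omega\cdot c_1>0$. Hence $E$ is torsion and must have zero-dimensional support, since any one-dimensional support gives $\omega\cdot c_1>0$ by ampleness. So $E$ is a skyscraper-type sheaf. Then $\operatorname{Hom}(E,\cO_X[1])=\Ext^1(E,\cO_X)$, which by Serre duality is $H^1(X,E\otimes\omega_X)^\vee=0$ because $E$ is zero-dimensional. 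The morphism $E\to\cO_X[1]$ is therefore zero, and it can only be a monomorphism if $E=0$.

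Combining the two cases, every subobject of $\cO_X[1]$ is either $0$ or all of $\cO_X[1]$, so $\cO_X[1]$ is $\sigma_{B,\omega}$-stable. The only step needing care is showing that $E$ is zero-dimensional in the second case, which follows from $\im Z=0$ together with strict positivity of $\omega$ on non-zero effective one-cycles.
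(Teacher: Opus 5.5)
\textbf{The gap is in the case $H^{-1}(E)\neq0$.} Your overall route matches the paper's: the coherent cohomology sequence, a case split on $H^{-1}(E)$, the vanishing of $\im Z$ forcing $E$ to be zero-dimensional, and then $\Ext^1(E,\cO_X)=0$. But from $H^{-1}(E)\neq0$ you conclude $\rk H^{-1}(Q)=0$, and this does not follow. In
\[
  0\to H^{-1}(E)\to\cO_X\to H^{-1}(Q)\to H^0(E)\to0,
\]
rank additivity with $\rk H^{-1}(E)=1$ only gives $\rk H^{-1}(Q)=\rk H^0(E)$. Nothing you have said prevents $H^0(E)$ from having positive rank.

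What does follow is this. The image $\cO_X/H^{-1}(E)$ of $\cO_X\to H^{-1}(Q)$ is torsion, hence zero because $H^{-1}(Q)$ is torsion-free. So $H^{-1}(E)=\cO_X$ and $H^{-1}(Q)\cong H^0(E)$. You still need one more input to kill this sheaf, and either of the following works:
\begin{itemize}
  \item It lies in $\cF_{B,\omega}\cap\cT_{B,\omega}=0$; this is the paper's argument.
  \item With your own tools: $0=\im Z(E)=\omega\cdot c_1(H^0(E))$ and $H^0(E)\in\cT_{B,\omega}$ force $H^0(E)$ to be zero-dimensional. It is then zero, being isomorphic to the torsion-free sheaf $H^{-1}(Q)$.
\end{itemize}
Only after this do you get $E\cong\cO_X[1]$ and $Q=0$.

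\textbf{The case $H^{-1}(E)=0$ is correct and slightly more direct than the paper.} The paper rewrites $Q$ as a shifted sheaf and obtains $0\to\cO_X\to Q[-1]\to E\to0$. It then proves $\Ext^1_X(E,\cO_X)=0$ via grade two and the local-to-global spectral sequence, splits the extension, and contradicts torsion-freeness. You observe instead that the given monomorphism is itself an element of $\operatorname{Hom}(E,\cO_X[1])=\Ext^1(E,\cO_X)$. Serre duality identifies this group with $H^1(X,E\otimes\omega_X)^\vee=0$, so the map is zero and $E=0$. The two arguments use the same vanishing, since the extension class is exactly that morphism, but yours avoids identifying $Q$.

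Your Hodge index computation of $Z(\cO_X[1])$ is correct but not needed. The range condition on $Z_{B,\omega}$ already gives that it is a negative real number.
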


\begin{proof}
This is \cite[Lemma~6.1]{AM}. We include the argument to make the boundary
case and our conventions explicit.
Suppose
\begin{equation}\label{eq:subobject-O1}
  0\longrightarrow E\longrightarrow\cO_X[1]
  \longrightarrow Q_0\longrightarrow0
\end{equation}
is exact in $\cA_{B,\omega}$ and $E$ is proper. Coherent cohomology gives
\begin{equation}\label{eq:cohomology-O1}
  0\longrightarrow H^{-1}(E)\longrightarrow\cO_X
  \longrightarrow H^{-1}(Q_0)\longrightarrow H^0(E)
  \longrightarrow0,
\end{equation}
and $H^0(Q_0)=0$.

The image of $\cO_X\to H^{-1}(Q_0)$ is a subsheaf of the torsion-free
sheaf $H^{-1}(Q_0)\in\cF_{B,\omega}$. If $H^{-1}(E)$ were a non-zero
proper subsheaf of $\cO_X$, then $\cO_X/H^{-1}(E)$ would be a non-zero
torsion sheaf, which cannot embed into $H^{-1}(Q_0)$. Hence either
$H^{-1}(E)=0$ or $H^{-1}(E)=\cO_X$. The second possibility forces
$H^{-1}(Q_0)\cong H^0(E)$, but the former lies in $\cF_{B,\omega}$ and
the latter in $\cT_{B,\omega}$, so both vanish and $E=\cO_X[1]$, contrary
to properness. Thus $H^{-1}(E)=0$.

Consequently, $E=H^0(E)$ is a sheaf in $\cT_{B,\omega}$, $Q_0=Q[1]$ for
a torsion-free sheaf $Q\in\cF_{B,\omega}$, and
\eqref{eq:cohomology-O1} becomes
\begin{equation}\label{eq:O-Q-E}
  0\longrightarrow\cO_X\longrightarrow Q\longrightarrow E\longrightarrow0.
\end{equation}
Since $B\cdot\omega=0$, \eqref{eq:central-charge-expanded} gives
\[
  \im Z_{B,\omega}(\cO_X[1])=0.
\]
By \eqref{eq:stability-function-range} and additivity in
\eqref{eq:subobject-O1},
\[
  \im Z_{B,\omega}(E)=0.
\]
A non-zero sheaf in $\cT_{B,\omega}$ has strictly positive imaginary part
unless it is supported in dimension zero: a positive-rank quotient has all
slope-HN factors of slope $>B\cdot\omega=0$, and a one-dimensional torsion
sheaf has positive intersection of its effective first Chern class with the
ample class $\omega$. Thus $E$ is zero-dimensional.

For every closed point $x\in X$, the local ring $\cO_{X,x}$ is a regular
local ring of dimension two. A finite-length $\cO_{X,x}$-module has grade
two, equivalently,
\[
  \operatorname{Ext}^{i}_{\cO_{X,x}}(E_x,\cO_{X,x})=0
  \qquad(i<2).
\]
Thus the sheaves $\mathcal{H}\!om(E,\cO_X)$ and
$\mathcal{E}\!xt^1(E,\cO_X)$ vanish. The local-to-global Ext spectral
sequence \cite[(3.16), p.~85]{Huybrechts} then gives
\[
  \Ext_X^1(E,\cO_X)=0.
\]
Hence \eqref{eq:O-Q-E} splits. Then $Q\cong\cO_X\oplus E$ contains
torsion, contradicting $Q\in\cF_{B,\omega}$ unless $E=0$. Therefore there
is no non-zero proper subobject.
\end{proof}

\subsection{The positive chamber and derived duality}

Assume now that
\begin{equation*}
  B\cdot\omega>0.
\end{equation*}
Consider the involutive exact anti-autoequivalence
\begin{equation*}
  \mathbb D(E):=\RSheafHom(E,\cO_X)[1].
\end{equation*}
Because every object of $\Db(X)$ is perfect, biduality gives a canonical
isomorphism $\mathbb D^2\cong\operatorname{id}$. If
$\ch(E)=(r,c,d)$, then
\[
  \ch(\mathbb D E)=(-r,c,-d),
\]
and \eqref{eq:central-charge-expanded} gives the central-charge identity
\begin{equation}\label{eq:duality-central-charge}
  Z_{B,\omega}(\mathbb D E)
  =-\ol{Z_{-B,\omega}(E)}.
\end{equation}
Thus, for an object whose normalized Bridgeland phase is $\phi\in(0,1)$,
derived duality changes the phase to $1-\phi$.

We use the standard duality lemma for the divisorial stability conditions
considered here: whenever $D\cdot\omega<0$,
\begin{equation}\label{eq:duality-stability-equivalence}
  \cO_X\text{ is $\sigma_{D,\omega}$-(semi)stable}
  \quad\Longleftrightarrow\quad
  \cO_X[1]\text{ is $\sigma_{-D,\omega}$-(semi)stable}.
\end{equation}
Arcara--Miles~\cite[Lemma~6.2]{AM} state this equivalence exactly in the
form used here, for both stability and semistability, under the displayed
assumption $D\cdot\omega<0$. We use only
\eqref{eq:duality-stability-equivalence}, with $D=-B$.

Fix either the weak case, in which $\cO_X[1]$ is not stable, or the
strict case, in which it is not semistable. By
\eqref{eq:duality-stability-equivalence}, $\cO_X$ has the corresponding
failure of stability or semistability for $\sigma_{-B,\omega}$. Since
\[
  (-B)\cdot\omega<0,
\]
the two versions of the unshifted theorem give an effective divisor $C$ with
$C^2<0$ such that $\cO_X(-C)$ is a subobject of $\cO_X$ in
$\cA_{-B,\omega}$ and
\begin{equation}\label{eq:dual-starting-phase}
  \Arg Z_{-B,\omega}(\cO_X(-C))
  \geq
  \Arg Z_{-B,\omega}(\cO_X),
\end{equation}
with strict inequality in the strict case.

Since $\cO_X(-C)\in\cA_{-B,\omega}$, the definition of the tilted heart gives
\[
  \omega\cdot(B-C)>0,
  \qquad\text{and hence}\qquad C\cdot\omega<B\cdot\omega.
\]
Thus $\cO_X(C)[1]\in\cA_{B,\omega}$. Since $C$ is Cartier, the exact
sequence
\[
  0\longrightarrow\cO_X(-C)\longrightarrow\cO_X
  \longrightarrow\cO_C\longrightarrow0
\]
is a locally free resolution of $\cO_C$. Dualizing it gives
\[
  \RSheafHom(\cO_C,\cO_X)\cong \cO_X(C)|_C[-1],
  \qquad\text{hence}\qquad
  \mathbb D(\cO_C)\cong\cO_X(C)|_C.
\]
Applying $\mathbb D$ to the associated exact triangle therefore gives
\[
  \cO_X(C)|_C\longrightarrow\cO_X[1]
  \longrightarrow\cO_X(C)[1]\longrightarrow\cO_X(C)|_C[1].
\]
The first three terms all lie in $\cA_{B,\omega}$, so this is the short exact
sequence
\begin{equation}\label{eq:shifted-divisor-exact}
  0\longrightarrow\cO_X(C)|_C
  \longrightarrow\cO_X[1]
  \longrightarrow\cO_X(C)[1]
  \longrightarrow0
\end{equation}
in the tilted heart.

By \eqref{eq:duality-central-charge}, the inequality
\eqref{eq:dual-starting-phase} becomes
\begin{equation}\label{eq:quotient-phase-lower}
  \Arg Z_{B,\omega}(\cO_X(C)[1])
  \leq
  \Arg Z_{B,\omega}(\cO_X[1]),
\end{equation}
with strict inequality in the strict case. For non-zero complex numbers
$z_1,z,z_2$ in the semi-closed upper half-plane with $z=z_1+z_2$, the
argument of $z$ lies between the arguments of $z_1$ and $z_2$. Moreover, if
one of the two endpoint inequalities is strict, then the opposite endpoint
inequality is strict as well. Applying this seesaw property to
\eqref{eq:shifted-divisor-exact} and \eqref{eq:quotient-phase-lower} gives
\[
  \Arg Z_{B,\omega}(\cO_X(C)|_C)
  \geq
  \Arg Z_{B,\omega}(\cO_X[1]),
\]
with strict inequality in the strict case. This proves both non-trivial
implications in \Cref{thm:main}(b) for $\cO_X$, and tensor reduction proves
them for every line bundle.

\section{Consequences and numerical criteria}\label{sec:consequences}

We conclude by deriving numerical and geometric consequences of
\Cref{thm:main}. The first subsection converts the two phase
comparisons into the explicit inequalities $\Theta_C^\pm$, yielding
\Cref{cor:numerical-stability}. We then prove
\Cref{cor:actual-wall,cor:no-negative-divisors}, describing strictly
semistable points and the consequence for surfaces with no negative integral
curves. Finally, we study the behavior under
$(B,\omega,L)\mapsto(kB, k\omega ,L^{\otimes k})$ and prove the equivalence
with numerical $B$-twisted dHYM-semistability in
\Cref{cor:all-scalings}.

\subsection{Explicit phase inequalities}

Let $L$ be a line bundle and set
\[
  \alpha:=c_1(L)-B,
  \qquad s:=\alpha\cdot\omega.
\]
For every effective divisor $C$, define
\begin{align}
  \Theta_C^-(\alpha,\omega)
  &:=(C\cdot\omega)(\omega^2-\alpha^2)
    +(2C\cdot\alpha-C^2)s,
  \label{eq:Theta-minus}\\
  \Theta_C^+(\alpha,\omega)
  &:=(C\cdot\omega)(\omega^2-\alpha^2)
    +(2C\cdot\alpha+C^2)s.
  \label{eq:Theta-plus}
\end{align}

\begin{proposition}
\label{prop:phase-determinants}
Let $C$ be a non-zero effective divisor.
\begin{enumerate}[label=\textup{(\alph*)}]
  \item If $s>0$ and $s-C\cdot\omega>0$, then
  \[
    \Arg Z_{B,\omega}(L(-C))
    \geq \Arg Z_{B,\omega}(L)
    \quad\Longleftrightarrow\quad
    \Theta_C^-(\alpha,\omega)\leq0.
  \]

  \item If $s<0$ and $s+C\cdot\omega\leq0$, then
  \[
    \Arg Z_{B,\omega}(L(C)|_C)
    \geq \Arg Z_{B,\omega}(L[1])
    \quad\Longleftrightarrow\quad
    \Theta_C^+(\alpha,\omega)\leq0.
  \]
\end{enumerate}
In both cases the phase inequality is strict if and only if the numerical
inequality is strict.
\end{proposition}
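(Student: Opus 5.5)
The plan is to compute both central charges in closed form in terms of $\alpha=c_1(L)-B$ and $\omega$, and then compare arguments through the sign of $\im(z\ol{z_0})$, exactly as in the proof of the phase--$\Lambda$ equivalence in \Cref{sec:phase-functional}.

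\textbf{The central charge of a twisted line bundle.} Since $e^{-B}\ch(M)=e^{c_1(M)-B}$ for a line bundle $M$, we have
\[
Z_{B,\omega}(M)=-\int_X e^{(c_1(M)-B)-i\omega}.
\]
Taking the degree-two part of $e^{\beta-i\omega}$ for $\beta:=c_1(M)-B$ gives
\[
Z_{B,\omega}(M)=\tfrac12(\omega^2-\beta^2)+i\,\omega\cdot\beta .
\]
This yields the following three values:
\begin{itemize}
\item $Z(L)=\tfrac12(\omega^2-\alpha^2)+is$;
\item $Z(L(-C))=\tfrac12\bigl(\omega^2-(\alpha-C)^2\bigr)+i(s-C\cdot\omega)$;
\item $Z(L(C))=\tfrac12\bigl(\omega^2-(\alpha+C)^2\bigr)+i(s+C\cdot\omega)$.
\end{itemize}
From the exact sequence $0\to L\to L(C)\to L(C)|_C\to0$ we get
\[
Z(L(C)|_C)=Z(L(C))-Z(L)=-\tfrac12\bigl(2C\cdot\alpha+C^2\bigr)+i\,C\cdot\omega .
\]
We also have $Z(L[1])=-Z(L)$.

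\textbf{Reduction to a sign.} In each case, both objects being compared are non-zero objects of $\cA_{B,\omega}$. In (a) this holds because $L(-C)\in\cA_{B,\omega}$ by \Cref{lem:divisorial-subobject-criterion}(a). In (b) we have $L(C)|_C\in\cT_{B,\omega}$, and $L[1]\in\cA_{B,\omega}$ since $s<0$. By \eqref{eq:stability-function-range}, their arguments $\theta,\theta_0$ lie in $(0,\pi]$. Hence $\theta-\theta_0\in(-\pi,\pi)$, and
\[
\operatorname{sgn}(\theta-\theta_0)=\operatorname{sgn}\im(z\ol{z_0}),
\]
where $\im(z\ol{z_0})=y\,x_0-x\,y_0$ for $z=x+iy$ and $z_0=x_0+iy_0$. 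Thus the weak and strict phase inequalities are exactly $\im(z\ol{z_0})\ge0$ and $\im(z\ol{z_0})>0$, respectively.

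\textbf{Case (a).} Take $z=Z(L(-C))$ and $z_0=Z(L)$. Then
\[
2\,\im(z\ol{z_0})=(s-C\cdot\omega)(\omega^2-\alpha^2)-s\bigl(\omega^2-(\alpha-C)^2\bigr).
\]
Expanding $(\alpha-C)^2-\alpha^2=-2C\cdot\alpha+C^2$, the right-hand side becomes
\[
-(C\cdot\omega)(\omega^2-\alpha^2)-s(2C\cdot\alpha-C^2)=-\Theta_C^-(\alpha,\omega).
\]

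\textbf{Case (b).} Take $z=Z(L(C)|_C)$ and $z_0=-Z(L)$, so that $x_0=-\tfrac12(\omega^2-\alpha^2)$ and $y_0=-s$. Then
\[
2\,\im(z\ol{z_0})=-(C\cdot\omega)(\omega^2-\alpha^2)-(2C\cdot\alpha+C^2)s=-\Theta_C^+(\alpha,\omega).
\]

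In both cases the phase inequality is therefore equivalent to $\Theta_C^\mp\le0$, with strictness matching. The only point needing care is the sign bookkeeping in (b), namely the shift sign and the additivity used for the torsion sheaf. The argument-range justification is the same as for the phase--$\Lambda$ equivalence in \Cref{sec:phase-functional}, so I do not expect a genuine obstacle.
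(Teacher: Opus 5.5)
Your proof is correct. Part (a) is exactly the paper's computation: the paper first tensors by $L^{-1}$ and works with $\cO_X$ at parameter $-\alpha$, but it is the same determinant $\im(z\ol{z_0})=-\tfrac12\Theta_C^-$.

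In part (b) you take a slightly different route.
\begin{itemize}
\item The paper uses the short exact sequence $0\to L(C)|_C\to L[1]\to L(C)[1]\to0$ from \Cref{lem:divisorial-subobject-criterion} together with the seesaw property. This turns the comparison with the torsion subobject into a comparison of the quotient $L(C)[1]$ with $L[1]$. It then only needs the line-bundle determinant $\im\bigl(Z(\cO_X(C))\ol{Z(\cO_X)}\bigr)=\tfrac12\Theta_C^+$, which is unchanged after multiplying both charges by $-1$.
\item You instead compute $Z(L(C)|_C)=-\tfrac12(2C\cdot\alpha+C^2)+i\,C\cdot\omega$ by additivity in $\Coh(X)$. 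You then compare it directly with $Z(L[1])=-Z(L)$.
\end{itemize}
Your version is a bit more direct. It also shows that the hypothesis $s+C\cdot\omega\leq0$ is not needed for the numerical equivalence in (b). You only use that $L(C)|_C\in\cT_{B,\omega}$ and $L[1]\in\cA_{B,\omega}$, and the hypothesis matters only for $L(C)|_C\to L[1]$ to be a monomorphism. The paper's version keeps (a) and (b) formally parallel, both being comparisons of line bundles with the structure sheaf.
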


\begin{proof}
Tensoring by $L^{-1}$ replaces $B$ by $B-c_1(L)=-\alpha$ and preserves all
central charges. It is therefore enough to compare the structure sheaf with
$\cO_X(\pm C)$ for the parameter $-\alpha$.

Put $A:=\omega^2-\alpha^2$. In the unshifted case,
\[
  Z_{-\alpha,\omega}(\cO_X)=\frac A2+is
\]
and
\[
  Z_{-\alpha,\omega}(\cO_X(-C))
  =\left(\frac A2+C\cdot\alpha-\frac{C^2}{2}\right)
    +i(s-C\cdot\omega).
\]
Both numbers lie in the upper half-plane under the hypotheses of (a), and a
direct calculation gives
\begin{align*}
  &\im\!\left(
    Z_{-\alpha,\omega}(\cO_X(-C))
    \ol{Z_{-\alpha,\omega}(\cO_X)}
  \right)=-\frac12\Theta_C^-(\alpha,\omega).
\end{align*}
The sign of this determinant is the sign of the difference of the two
arguments, proving (a).

For (b), \Cref{lem:divisorial-subobject-criterion} gives a short exact
sequence
\[
  0\longrightarrow L(C)|_C\longrightarrow L[1]
  \longrightarrow L(C)[1]\longrightarrow0.
\]
By the seesaw property, the first term has phase at least that of the middle
term if and only if the quotient has phase at most that of the middle term.
After tensor reduction,
\[
  Z_{-\alpha,\omega}(\cO_X(C))
  =\left(\frac A2-C\cdot\alpha-\frac{C^2}{2}\right)
    +i(s+C\cdot\omega),
\]
and
\begin{align*}
  &\im\!\left(
    Z_{-\alpha,\omega}(\cO_X(C))
    \ol{Z_{-\alpha,\omega}(\cO_X)}
  \right)=\frac12\Theta_C^+(\alpha,\omega).
\end{align*}
Multiplying both central charges by $-1$, as required after shifting, does
not change this determinant. Thus the quotient has phase at most that of
$L[1]$ exactly when $\Theta_C^+\leq0$. The same determinant computations
show the assertions about strict inequalities.
\end{proof}

Combining \Cref{prop:phase-determinants} with \Cref{thm:main} gives a
completely numerical criterion.

\begin{corollary}[Numerical stability criterion]\label{cor:numerical-stability}
Let $\alpha=c_1(L)-B$ and $s=\alpha\cdot\omega$.
\begin{enumerate}[label=\textup{(\alph*)}]
  \item If $s>0$, then $L$ is $\sigma_{B,\omega}$-stable if and only if
  \[
    \Theta_C^-(\alpha,\omega)>0
  \]
  for every non-zero effective divisor $C$ satisfying
  \[
    C^2<0,
    \qquad C\cdot\omega<s.
  \]

  \item If $s<0$, then $L[1]$ is $\sigma_{B,\omega}$-stable if and only if
  \[
    \Theta_C^+(\alpha,\omega)>0
  \]
  for every non-zero effective divisor $C$ satisfying
  \[
    C^2<0,
    \qquad C\cdot\omega\leq -s.
  \]

  \item If $s=0$, then $L[1]$ is $\sigma_{B,\omega}$-stable.
\end{enumerate}
In (a) and (b), semistability of the same object is characterized by the
same conditions with $>0$ replaced by $\geq0$. The side conditions on $C$
are unchanged.
\end{corollary}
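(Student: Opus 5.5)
The plan is to combine \Cref{thm:main} with \Cref{prop:phase-determinants} and \Cref{lem:divisorial-subobject-criterion}, treating the three sign regimes of $s$ separately.

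For (a), assume $s>0$. By \Cref{thm:main}(a)(i), $L$ fails to be stable if and only if some non-zero effective $C$ satisfies $C^2<0$, $s-C\cdot\omega>0$ and $\Arg Z_{B,\omega}(L(-C))\geq\Arg Z_{B,\omega}(L)$. The side condition $s-C\cdot\omega>0$ is exactly $C\cdot\omega<s$. Under this side condition, \Cref{prop:phase-determinants}(a) converts the phase inequality into $\Theta_C^-(\alpha,\omega)\leq0$. Taking the contrapositive gives the stated criterion for stability. Semistability is handled in the same way: \Cref{thm:main}(a)(ii) asks for the strict phase inequality, and by the last sentence of \Cref{prop:phase-determinants} this corresponds to $\Theta_C^-<0$. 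Hence $L$ is semistable if and only if $\Theta_C^-\geq0$ for all admissible $C$.

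For (b), assume $s<0$. We argue identically with \Cref{thm:main}(b). The side condition $s+C\cdot\omega\leq0$ reads $C\cdot\omega\leq-s$, and it is precisely the hypothesis of \Cref{prop:phase-determinants}(b). That proposition translates $\Arg Z(L(C)|_C)\geq\Arg Z(L[1])$ into $\Theta_C^+\leq0$, and the strict inequality into $\Theta_C^+<0$.

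For (c), assume $s=0$. Tensor reduction (\Cref{lem:tensor-reduction}) replaces $L$ by $\cO_X$ and $B$ by $B-c_1(L)$. The new parameter satisfies $(B-c_1(L))\cdot\omega=-s=0$. \Cref{lem:boundary-stability} then says that $\cO_X[1]$ has no non-zero proper subobjects, so it is stable. Transporting back gives stability of $L[1]$.

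I do not expect any real obstacle. The point that needs checking is that the side conditions in \Cref{thm:main} and in \Cref{prop:phase-determinants} match exactly, so that the equivalence of phase and numerical inequalities applies to every candidate divisor. The boundary value $s=0$ is excluded from (b) because \Cref{prop:phase-determinants}(b) assumes $s<0$. That case is instead covered directly by (c).
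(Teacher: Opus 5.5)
Your proposal is correct and follows the paper's proof essentially step for step. For $s>0$ and $s<0$, you match the side conditions of \Cref{thm:main} with the hypotheses of \Cref{prop:phase-determinants}, translate the weak and strict phase inequalities into $\Theta_C^{\mp}\leq0$ and $\Theta_C^{\mp}<0$, and negate. For $s=0$, you reduce by tensoring to \Cref{lem:boundary-stability}.
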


\begin{proof}
We prove the stability and semistability statements in parallel. Suppose first
that $s>0$. By \Cref{lem:divisorial-subobject-criterion}, for a non-zero
effective divisor $C$ the morphism
\[
  L(-C)\longrightarrow L
\]
is a subobject in $\cA_{B,\omega}$ exactly when $C\cdot\omega<s$. Thus
\Cref{thm:main}(a) says that $L$ fails to be stable, respectively semistable,
if and only if there is such a divisor with $C^2<0$ for which
\[
  \Arg Z_{B,\omega}(L(-C))
  \geq \Arg Z_{B,\omega}(L),
\]
respectively
\[
  \Arg Z_{B,\omega}(L(-C))
  > \Arg Z_{B,\omega}(L).
\]
By \Cref{prop:phase-determinants}(a), these are equivalent to
$\Theta_C^-(\alpha,\omega)\leq0$, respectively
$\Theta_C^-(\alpha,\omega)<0$. Negating these conditions gives exactly the
strict inequality in the stability criterion and the weak inequality in the
semistability criterion.

Now suppose that $s<0$. By
\Cref{lem:divisorial-subobject-criterion}, the morphism
\[
  L(C)|_C\longrightarrow L[1]
\]
is a subobject exactly when $C\cdot\omega\leq-s$. Applying
\Cref{thm:main}(b) and then \Cref{prop:phase-determinants}(b) in the same way,
failure of stability, respectively semistability, is equivalent to the
existence of such a divisor with $C^2<0$ and
\[
  \Theta_C^+(\alpha,\omega)\leq0,
  \qquad\text{respectively}\qquad
  \Theta_C^+(\alpha,\omega)<0.
\]
Negating again proves (b), together with its semistable analogue.

Finally, if $s=0$, tensoring by $L^{-1}$ reduces to
\Cref{lem:boundary-stability}, which gives the stability of $L[1]$.
\end{proof}

\subsection{Strictly semistable points}

\begin{proof}[Proof of \Cref{cor:actual-wall}]
Put $\theta:=\Arg Z(T)$. Since $T$ is not stable,
\Cref{thm:main} gives a negative effective divisor $C$ and a divisorial
subobject $D_C\subset T$ with
\[
  \Arg Z(D_C)\geq\theta.
\]
Semistability of $T$ forces equality. By
\Cref{lem:divisorial-subobject-criterion}, the quotient is
$Q_C\cong L|_C$ when $T=L$ and $Q_C\cong L(C)[1]$ when $T=L[1]$. In
particular, $Q_C\neq0$. Additivity gives
\[
  Z(Q_C)=Z(T)-Z(D_C)\in\R e^{i\theta}.
\]
The coefficient is positive. Indeed, $Z(Q_C)$ is non-zero and lies in the
semi-closed upper half-plane by \eqref{eq:stability-function-range}.
A negative multiple of $e^{i\theta}$ lies in the lower half-plane when
$0<\theta<\pi$, while for $\theta=\pi$ it lies on the forbidden positive
real axis. Hence $Z(Q_C)$ is a positive real multiple of $e^{i\theta}$,
and $Q_C$ also has argument $\theta$.

The object $D_C$ is semistable. Indeed, any subobject of $D_C$ with argument
strictly larger than $\theta$ would, after composition with $D_C\subset T$,
be a subobject of $T$ with argument strictly larger than $\theta$.

It remains to prove that $Q_C$ is semistable. Suppose that
$A\subset Q_C$ has argument strictly larger than $\theta$, and let
$\widetilde A\subset T$ be its inverse image. Then
\[
  0\longrightarrow D_C\longrightarrow\widetilde A
  \longrightarrow A\longrightarrow0
\]
is exact. The central charge of $D_C$ lies on the ray of angle $\theta$, while
that of $A$ has larger argument. Therefore the argument of
$Z(\widetilde A)=Z(D_C)+Z(A)$ is strictly larger than $\theta$, contradicting
the semistability of $T$. Thus $Q_C$ is semistable, and all three terms have
the same argument.
\end{proof}

\begin{proof}[Proof of \Cref{cor:no-negative-divisors}]
First note that the hypothesis excludes every non-zero effective divisor of
negative self-intersection. Indeed, if
\[
  C=\sum_i m_iC_i
\]
is the decomposition of an effective divisor into distinct integral
components, then $C_i\cdot C_j\geq0$ for $i\neq j$. Hence
\[
  C^2=\sum_i m_i^2C_i^2
      +2\sum_{i<j}m_im_j(C_i\cdot C_j)\geq0
\]
whenever every $C_i^2\geq0$.

For a fixed line bundle $L$, either $L$ or $L[1]$ belongs to the tilted
heart. If that object were not stable, \Cref{thm:main} would produce a
non-zero effective divisor of negative self-intersection, contrary to the
preceding observation. Hence the appropriate shift is stable. Stability
is preserved under shifts, so $L$ is stable as an object of $\Db(X)$.
\end{proof}

\subsection{Stability under all integral scalings}

The scaling argument below follows \cite[Theorem~2.9]{Fan}, building on the
large-scaling viewpoint introduced in
\cite[Proposition~6.2]{Stoppa}. The new input is
\Cref{cor:numerical-stability}, which removes the use of
\cite[Conjecture~2.8]{Fan} in the implication from numerical twisted dHYM
semistability to stability under all integral scalings.

\begin{proof}[Proof of \Cref{cor:all-scalings}]
For an effective divisor
$C=\sum_i m_iC_i$ with integral components, the expression
\eqref{eq:Psi-dHYM-intro} is linear in $C$, so numerical twisted dHYM
semistability implies
\begin{equation}\label{eq:Psi-effective}
  \Psi_C(\alpha,\omega)\geq0
\end{equation}
for every effective divisor $C$.

For $k\geq1$, put
\[
  L_k:=L^{\otimes k},
  \qquad B_k:=kB,
  \qquad \omega_k:=k\omega.
\]
The relative class and its slope are
\[
  c_1(L_k)-B_k=k\alpha,
  \qquad
  (k\alpha)\cdot(k\omega)=k^2s.
\]
A direct substitution in \eqref{eq:Theta-minus} and
\eqref{eq:Theta-plus} gives
\begin{align}
  \Theta_C^-(k\alpha,k\omega)
  &=k^2\bigl(k\Psi_C(\alpha,\omega)-C^2s\bigr),
  \label{eq:scaled-Theta-minus}\\
  \Theta_C^+(k\alpha,k\omega)
  &=k^2\bigl(k\Psi_C(\alpha,\omega)+C^2s\bigr).
  \label{eq:scaled-Theta-plus}
\end{align}

Assume first that $L$ is numerically $B$-twisted dHYM-semistable. If
$s>0$ and $C^2<0$, then \eqref{eq:Psi-effective} and
\eqref{eq:scaled-Theta-minus} give
\[
  \Theta_C^-(k\alpha,k\omega)>0
\]
for every $k\geq1$. Hence \Cref{cor:numerical-stability}(a) shows that
$L_k$ is $\sigma_{kB,k\omega}$-stable. If $s<0$ and $C^2<0$, then
$C^2s>0$, and
\eqref{eq:scaled-Theta-plus} gives
\[
  \Theta_C^+(k\alpha,k\omega)>0.
\]
Now \Cref{cor:numerical-stability}(b) shows that $L_k[1]$, and therefore
$L_k$, is stable. If $s=0$, stability follows from
\Cref{cor:numerical-stability}(c). This proves (i)$\Rightarrow$(ii), while
(ii)$\Rightarrow$(iii) is immediate.

Assume conversely that $L_k$ is $\sigma_{kB,k\omega}$-stable for all
sufficiently large $k$, and fix an integral curve $C$. If $s>0$, then for
all sufficiently large $k$,
\[
  k^2s-k(C\cdot\omega)>0,
\]
so $L_k(-C)\to L_k$ is a subobject in the tilted heart by
\Cref{lem:divisorial-subobject-criterion}. Stability and
\Cref{prop:phase-determinants} imply
\[
  \Theta_C^-(k\alpha,k\omega)>0.
\]
Using \eqref{eq:scaled-Theta-minus} and dividing by $k^3$ gives
\[
  \Psi_C(\alpha,\omega)-\frac{C^2s}{k}>0.
\]
Letting $k\to\infty$ yields $\Psi_C(\alpha,\omega)\geq0$.

If $s<0$, then for all sufficiently large $k$,
\[
  k^2s+k(C\cdot\omega)\leq0,
\]
so $L_k(C)|_C\to L_k[1]$ is a subobject. Stability and
\Cref{prop:phase-determinants} give
\[
  \Theta_C^+(k\alpha,k\omega)>0.
\]
By \eqref{eq:scaled-Theta-plus},
\[
  \Psi_C(\alpha,\omega)+\frac{C^2s}{k}>0,
\]
and again the limit gives $\Psi_C(\alpha,\omega)\geq0$.

Finally, if $s=0$, the Hodge index theorem gives $\alpha^2\leq0$, and hence
\[
  \Psi_C(\alpha,\omega)
  =(C\cdot\omega)(\omega^2-\alpha^2)>0
\]
for every integral curve $C$. Thus (iii)$\Rightarrow$(i), completing the
proof.
\end{proof}


\begin{thebibliography}{99}

\bibitem{AB}
D.~Arcara and A.~Bertram,
\emph{Bridgeland-stable moduli spaces for $K$-trivial surfaces},
J. Eur. Math. Soc. (JEMS) \textbf{15} (2013), no.~1, 1--38,
with an appendix by M.~Lieblich.

\bibitem{AM}
D.~Arcara and E.~Miles,
\emph{Bridgeland stability of line bundles on surfaces},
J. Pure Appl. Algebra \textbf{220} (2016), no.~4, 1655--1677.

\bibitem{Bridgeland}
T.~Bridgeland,
\emph{Stability conditions on triangulated categories},
Ann.\ of Math. (2) \textbf{166} (2007), no.~2, 317--345.

\bibitem{BriK3}
T.~Bridgeland,
\emph{Stability conditions on $K3$ surfaces},
Duke Math. J. \textbf{141} (2008), no.~2, 241--291.

\bibitem{Chen}
G.~Chen,
\emph{The $J$-equation and the supercritical deformed
Hermitian--Yang--Mills equation},
Invent. Math. \textbf{225} (2021), no.~2, 529--602.

\bibitem{CJY}
T.~C.~Collins, A.~Jacob, and S.-T.~Yau,
\emph{$(1,1)$ forms with specified Lagrangian phase: a priori estimates and
algebraic obstructions},
Camb. J. Math. \textbf{8} (2020), no.~2, 407--452.

\bibitem{CollinsShi}
T.~C.~Collins and Y.~Shi,
\emph{Stability and the deformed Hermitian--Yang--Mills equation},
Surv. Differ. Geom. \textbf{24} (2019), 1--38.

\bibitem{Fan}
Y.-W.~Fan,
\emph{Notes on the deformed Hermitian--Yang--Mills equations and the large
scaling limits of stability conditions},
arXiv:2604.22246, 2026.

\bibitem{GKP}
D.~Greb, S.~Kebekus, and T.~Peternell,
\emph{Movable curves and semistable sheaves},
Int. Math. Res. Not. IMRN 2016, no.~2, 536--570.

\bibitem{Hartshorne}
R.~Hartshorne,
\emph{Algebraic geometry},
Graduate Texts in Mathematics \textbf{52},
Springer-Verlag, New York, 1977.

\bibitem{Huybrechts}
D.~Huybrechts,
\emph{Fourier--Mukai transforms in algebraic geometry},
Oxford Mathematical Monographs,
The Clarendon Press, Oxford University Press, Oxford, 2006.

\bibitem{JacobYau}
A.~Jacob and S.-T.~Yau,
\emph{A special Lagrangian type equation for holomorphic line bundles},
Math. Ann. \textbf{369} (2017), no.~1--2, 869--898.

\bibitem{KhalidSjoestroemDyrefelt}
S.~Khalid and Z.~Sj\"ostr\"om Dyrefelt,
\emph{The set of destabilizing curves for deformed Hermitian
Yang--Mills and $Z$-critical equations on surfaces},
Int. Math. Res. Not. IMRN 2024, no.~7, 5773--5814.

\bibitem{LYZ}
N.~C.~Leung, S.-T.~Yau, and E.~Zaslow,
\emph{From special Lagrangian to Hermitian--Yang--Mills via Fourier--Mukai
transform},
Adv. Theor. Math. Phys. \textbf{4} (2000), no.~6, 1319--1341.

\bibitem{MacriSchmidt}
E.~Macrì and B.~Schmidt,
\emph{Lectures on Bridgeland stability},
in \emph{Moduli of curves},
Lect. Notes Unione Mat. Ital. \textbf{21},
Springer, Cham, 2017, 139--211.

\bibitem{MY}
Y.~Mizuno and T.~Yoshida,
\emph{Bridgeland stability of sheaves on del Pezzo surface of Picard rank three},
arXiv:2502.18894, 2025.

\bibitem{ONeill}
B.~O'Neill,
\emph{Semi-Riemannian geometry with applications to relativity},
Pure and Applied Mathematics \textbf{103},
Academic Press, Inc., New York, 1983.

\bibitem{Stoppa}
J.~Stoppa,
\emph{Nakai--Moishezon criteria and the toric Thomas--Yau conjecture},
arXiv:2505.07228, 2025.

\end{thebibliography}
\end{document}